\newcommand{\norma}[1]{\| #1 \|}
\newcommand{\conj}[2]{\left \{ {#1} \, : \, {#2} \right \}}
\newcommand{\cvf}{\overset{\omega}{\rightarrow}}
\newcommand {\N} {\mathbb{N}}
\newcommand{\proof}{\noindent \textit{Proof: }}
\newcommand{\qed}{\hfill \ensuremath{\Box}}
\newtheorem{df}{Definition}[section]
\newtheorem{prop}[df]{Proposition}
\newtheorem{teo}[df]{Theorem}
\newtheorem{cor}[df]{Corollary}
\newtheorem{ex}[df]{Example}
\newtheorem{exs}[df]{Examples}
\newtheorem{rema}[df]{Remark}
\begin{document}

\title{A  class of sets in a Banach space coarser than limited sets}
\author{Pablo Galindo\footnote{Partially supported by Spanish MINECO/FEDER PGC2018-094431-B-I00.} ~and V. C. C. Miranda \footnote{Supported by a CAPES PhD's scholarship.}}
\date{}

\maketitle
\begin{abstract}
{A wide new class of subsets of a Banach space $X$ named coarse $p$-limited sets ($ 1\leq p < \infty$) is introduced by considering weak* $p$-summable sequences in $X'$ instead of  weak* null sequences. We study its basic properties and compare it with the class of compact and weakly compact sets. Results concerning the relationship of coarse $p$-limited sets with operators are obtained.

\noindent \textbf{Key-words: } coarse $p$-limited sets, Dunford-Pettis* property, Gelfand-Phillips property, limited sets, weak* p-summable sequences.
}\end{abstract}

\section{Introduction}
 Recall that a subset $A$ of a Banach space $X$ is said to be \textsl{limited} if weak*-null sequences in $X'$ converge uniformly to $0$ on $A.$ Or equivalently, if for every linear operator $T:X\to c_0,$ $T(A)$ is a relatively compact set. Notions alike to limitedness have been considered in several contexts. For instance, in the Banach lattice context the so-called \textsl{almost limited} sets were introduced by Chen, Chen and Ji \cite{chen}, while, around the approximation properties, the $p-$\textsl{limited} ($1 \leq p < \infty$) sets were defined by Karn and Sinha \cite{karnsinha2} and then studied by Delgado and Pi\~neiro \cite{delgado}.

A subset $A$ of $X$ is \textit{$p$-limited} ($1 \leq p < \infty$) if for every weak* $p$-summable sequence $(x_n') \subset X'$, there is $a = (a_n) \in \ell_{p}$ such that $|x_n'(x)| < a_n$ for all $x \in A$ and $n \in \N$.
\smallskip

The aim of this note is to look at the concept of limited set  in a more natural way just by replacing $c_0$ by $\ell_p, 1 \leq p < \infty$. The following results motivate our study and the coming Definition \ref{plim4}.

\begin{prop} \label{plim2}
If $A$ is a $p$-limited set, $1 \leq p < \infty$, then $T(A)$ is relatively compact in $\ell_p$ for all $T \in L(X; \ell_p)$.
\end{prop}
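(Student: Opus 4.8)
The plan is to manufacture, out of the operator $T$, a specific weak* $p$-summable sequence in $X'$ on which the defining property of $p$-limited sets can be used, and then to interpret the resulting estimate as confining $T(A)$ inside a compact order interval of $\ell_p$.

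First, let $(e_n)$ be the coordinate functionals on $\ell_p$, regarded as the canonical elements of $(\ell_p)' = \ell_{p'}$ via $e_n(y) = y_n$, and put $x_n' := T'(e_n) \in X'$. For every $x \in X$ we have $x_n'(x) = e_n(Tx) = (Tx)_n$, whence $\sum_{n} |x_n'(x)|^p = \|Tx\|_p^p \le \|T\|^p\|x\|^p$. Therefore $(x_n')$ is weak* $p$-summable, since the induced map $x \mapsto (x_n'(x))_n$ is exactly $T$, which is bounded into $\ell_p$.

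Now I apply the hypothesis: because $A$ is $p$-limited, there is $a = (a_n) \in \ell_p$ with $|x_n'(x)| < a_n$ for all $x \in A$ and $n \in \N$, that is, $|(Tx)_n| \le a_n$ for every $x \in A$ and every $n$. Hence $T(A)$ is contained in the order interval $K := \{\, y \in \ell_p : |y_n| \le a_n \text{ for all } n \,\}$. It then remains only to show that $K$ is compact in $\ell_p$, for then $T(A)$, being a subset of $K$, is relatively compact.

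For the compactness of $K$ I would invoke the standard tail criterion for relative compactness in $\ell_p$: the set $K$ is bounded, since $\|y\|_p^p \le \sum_n a_n^p = \|a\|_p^p$ for $y \in K$, and its tails vanish uniformly, since $\sup_{y \in K} \sum_{n>N} |y_n|^p \le \sum_{n>N} a_n^p \to 0$ as $N \to \infty$ because $a \in \ell_p$. Thus $K$ is relatively compact (in fact compact, being also closed), which finishes the argument. The only step requiring any real work is this last compactness of the order interval; the Fr\'echet--Kolmogorov-type tail criterion handles it, while the preceding steps are a straightforward unwinding of the definitions of the adjoint and of weak* $p$-summability.
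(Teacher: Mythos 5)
Your proof is correct and takes essentially the same route as the paper's: extract the weak* $p$-summable sequence $x_n' = T'(e_n)$ from $T$, apply the definition of $p$-limited to obtain a dominating sequence $(a_n) \in \ell_p$, and conclude via the uniform-tail criterion for relative compactness in $\ell_p$. The only cosmetic difference is that you package the estimate as containment of $T(A)$ in a compact order interval, whereas the paper applies the tail criterion to $T(A)$ directly.
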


\proof
Let $T: X \to \ell_p$ be the bounded operator defined by $T(x) = (x_n'(x))$. Since $(x_n')$ is weak* $p$-summable in $X'$ and $A$ is $p$-limited, there exists $(a_i) \in \ell_p$ such that $|x_n'(x)| \leq a_n$ for all $x \in A$ and $n \in \N$. Note that, for each $n$, $\sum_{i=n}^\infty |x_n'(x)|^p \leq \sum_{i=n}^\infty |a_i|^p$ for all $x \in A$, what implies that
$$s_n = \sup \conj{\sum_{i=n}^\infty |x_n'(x)|^p  }{x \in A} \leq \sum_{i=n}^\infty |a_i|^p \quad \text{for all $n \in \N$}.$$
As $(a_n) \in \ell_p$, $\sum_{i=n}^\infty |a_i|^p \to 0$ as $n \to \infty$. Therefore $s_n \to 0$, and consequently, $T(A)$ is relatively compact in $\ell_p$ \cite[Ex. 15, pg. 168]{alip}.
\qed

\smallskip

 The following is an example of a set that satisfies the thesis in Proposition \ref{plim2} which it is not $p$-limited.

\begin{ex} \label{plim3}
Let $1 \leq p < \infty$. By Pitt's theorem, every bounded operator $T: c_0 \to \ell_p$ is compact, i.e. $T(B_{c_0})$ is a compact set in $\ell_p$ for all $T \in L(c_0; \ell_p)$. However, $B_{c_0}$ cannot be a $p$-limited set, because it is not relatively weakly compact \cite[Proposition 2.1]{delgado}.
\end{ex}

\begin{df} \label{plim4}
Let $1 \leq p < \infty$. We say that a subset $A$ of $X$ is a \textit{coarse $p$-limited set} if~ $T(A) \subset \ell_p$ is a relatively compact set for all $T \in L(X; \ell_p)$.
\end{df}
It follows from Proposition \ref{plim2} that every $p$-limited set is coarse $p$-limited. However, in every infinite dimensional Banach space $X$ there are coarse $p$-limited sets that are not $p$-limited: Indeed, according to \cite[Corollary 3.3]{delgado}, there are compact sets in $X$ that are not $p$-limited. This remarkable difference led us to choose the word coarse in our definition.

In Section 2 we prove the basic results of the coarse $p$-limited sets. Their relationship with compact and weakly compact sets is discussed in Section 3. For instance, in the spaces $L_1(\mu)$ the coarse $1$-limited sets are relatively compact if and only if $L_1(\mu)$ is a Schur space. Also,
all weakly compact sets in $X$ are coarse $p$-limited sets if and only if every $T\in L(X;\ell_p)$ is completely continuous. Our notation and terminology is standard; we refer to \cite{habala} for background on Banach space theory.

\section{Basic results}

\begin{rema} \label{plim5}
\begin{enumerate}
    \item In Example \ref{plim3}, we saw that the unit ball $B_{c_0}$ is a coarse $p$-limited set in $c_0$ that is neither $p$-limited, $1 \leq p < \infty$ nor limited.

    \item For $1 \leq p < q < \infty$, the unit ball $B_{\ell_q}$ is a coarse $p$-limited set in $\ell_q$ that is not $p$-limited. In fact, by Pitt's theorem, every bounded operator $T \in L(\ell_q; \ell_p)$ is compact. Consequently, $B_{\ell_q}$ is coarse $p$-limited. However, since every $p$-limited set in $\ell_q$ is relatively p-compact \cite[Proposition 3.6]{delgado} and p-compactness implies compactness, we have that $B_{\ell_q}$ cannot be $p$-limited.

    \item The class of relatively compact sets in $\ell_p$ coincides with the class of coarse $p$-limited sets.

    \item If $A \subset X$ is limited, then $A$ is coarse $p$-limited. Indeed, if $T: X \to \ell_p$ is a bounded operator, then $T(A)$ is a limited set in $\ell_p$. Since $\ell_p$ has the Gelfand-Phillips property, it follows that $T(A)$ is relatively compact in $\ell_p$. Hence $A$ is a coarse $p$-limited set in $X$.

\end{enumerate}
\end{rema}

In particular, we have the following scheme:
$$ \text{limited or $p$-limited} \quad \Rightarrow \quad \text{coarse $p$-limited} \quad \nRightarrow \quad \text{limited nor $p$-limited}. $$

The following proposition concerning basic properties of coarse $p$-limited sets is immediate.
Compare with the corresponding results for  $p$-limited sets \cite[Proposition 2.2]{karnsinha2} and \cite[Proposition 2.1]{delgado}, among them that $p$-limited sets are relatively weakly compact.


\begin{prop}
 \label{plim6}
 Let $A, B$ be two subsets of a Banach space $X$.
 \begin{enumerate}
     \item If $B$ is coarse $p$-limited and $A \subset B$, then $A$ is coarse $p$-limited.

     \item If $A$ is coarse $p$-limited, then $\overline{A}$ is coarse $p$-limited.

     \item If $A$ and $B$ are coarse $p$-limited, then $A \cap B$, $A + B$ and $A \cup B$ are coarse $p$-limited.

     \item  If $A$ is a coarse $p$-limited and if $T \in L(X; Y)$, then $T(A)$ is a coarse $p$-limited set in $Y$.

    \item If $A$ is a coarse $p$-limited, then its circled convex hull, $aco \, (A),$ is a coarse $p$-limited sets as well.

 \end{enumerate}
\end{prop}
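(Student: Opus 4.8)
The plan is to reduce each assertion to an elementary fact about relatively compact subsets of $\ell_p$ combined with the linearity and continuity of the operators involved. Throughout I fix an arbitrary $T \in L(X; \ell_p)$ and examine the image under $T$ of the relevant set, since by Definition \ref{plim4} coarse $p$-limitedness is precisely the relative compactness of all such images.

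For (1), $A \subset B$ gives $T(A) \subset T(B)$, and the latter is relatively compact by hypothesis, so $T(A)$ is relatively compact as a subset of a relatively compact set. For (2), continuity of $T$ yields $T(\overline{A}) \subset \overline{T(A)}$; the closure $\overline{T(A)}$ is compact, whence $T(\overline{A})$ is relatively compact. For (3), the intersection case is immediate from (1) since $A \cap B \subset A$; for the union I would use $T(A \cup B) = T(A) \cup T(B)$ together with the fact that a finite union of relatively compact sets is relatively compact; and for the sum I would use the relation $T(A+B) = T(A) + T(B)$ and recall that the sum of two relatively compact sets is relatively compact, its closure being contained in the compact set $\overline{T(A)} + \overline{T(B)}$, the continuous image of $\overline{T(A)} \times \overline{T(B)}$ under addition.

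For (4), I would pass through a composition of operators. Given $S \in L(Y; \ell_p)$, the map $S \circ T$ belongs to $L(X; \ell_p)$, so $(S \circ T)(A) = S(T(A))$ is relatively compact because $A$ is coarse $p$-limited. Since $S$ was arbitrary, $T(A)$ is coarse $p$-limited in $Y$.

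The only assertion requiring more than routine manipulation is (5). Here I would exploit linearity to write $T(aco(A)) = aco(T(A))$, reducing the problem to showing that $aco(T(A))$ is relatively compact in $\ell_p$ knowing that $T(A)$ is. This is where the one genuine input enters: the closed absolutely convex hull of a relatively compact subset of a Banach space is compact (the balanced hull of the compact set $\overline{T(A)}$ is compact, being the image of the product of the closed unit disc of scalars with $\overline{T(A)}$ under scalar multiplication, and its closed convex hull is then compact by Mazur's theorem). Applying this, $\overline{aco(\overline{T(A)})}$ is compact, and since $aco(T(A)) \subset \overline{aco(\overline{T(A)})}$, the set $T(aco(A))$ is relatively compact. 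I expect this compactness-of-the-absolutely-convex-hull fact to be the main point to invoke; everything else follows formally from the definition.
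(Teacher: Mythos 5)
Your proposal is correct and follows essentially the same route as the paper: fix an arbitrary $T \in L(X;\ell_p)$, translate each assertion into a statement about relatively compact subsets of $\ell_p$, use composition $S \circ T$ for item (4), and invoke Mazur's theorem (via the compactness of the closed absolutely convex hull of a relatively compact set) for item (5). The only cosmetic difference is that you justify the absolutely convex hull step in slightly more detail (balanced hull as a continuous image of a product, then Mazur), and you note the equality $T(aco(A)) = aco(T(A))$ where the paper only needs the inclusion $T(aco(A)) \subset aco(T(A))$; neither changes the substance.
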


\begin{proof}
$1.$ If $T: X \to \ell_p$ is a bounded operator, then $T(B)$ is relatively compact in $\ell_p$. Since $A \subset B$, we have that $T(A) \subset T(B)$, what implies that $T(A)$ is a relatively compact set as well. Thus $A$ is coarse $p$-limited.

$2.$ If $T: X \to \ell_p$ is a bounded operator, then $\overline{T(A)}$ is compact in $\ell_p$. Since $T(\overline{A}) \subset \overline{T(A)}$, we get that $T(\overline{A})$ is relatively compact. Hence $\overline{A}$ is coarse $p$-limited.

$3.$ If $T: X \to \ell_p$ is a bounded operator, then $T(A)$ and $T(B)$ are relatively compact sets. Consequently, $T(A + B) = T(A) + T(B)$ and $T(A) \cup T(B)$ are relatively compact sets as well. Since $T(A \cap B) \subset T(A)$, $T(A + B) = T(A) + T(B)$ and $T(A\cup B) \subset T(A) \cup T(B)$, we have that $A \cap B$, $A + B$ and $A \cup B$ are coarse $p$-limited sets.

$4.$ If $S: Y \to \ell_p$ is a bounded operator, then $S \circ T: X \to \ell_p$ is a bounded operator. Since $A \subset X$ is coarse $p$-limited, $ST(A)$ is relatively compact in $\ell_p$. Therefore $T(A)$ is coarse $p$-limited.

$5.$ If $T: X \to \ell_p$ is a bounded operator, then $T(A)$ is a relatively compact set. By Mazur's theorem, the circled convex hull of $T(A)$ is a relatively compact set.
Since
 $T(aco \, (A)) \subset aco \, (T(A))$ it follows that $aco \, (A)$ is a coarse $p$-limited set.
\qed
\end{proof}

\smallskip

It follows from Remark \ref{plim5} that there exists coarse $p$-limited sets that are not relatively weakly compact. On the other hand, it also follows that $B_{\ell_2}$ is a coarse $1$-limited  set which is not coarse $2$-limited. Therefore, we do not have a version of \cite[Proposition 2.2]{delgado} for coarse $p$-limited sets.

\smallskip

Bourgain and Diestel showed that every limited set is conditionally weakly compact \cite{bourgdiest}. It is natural to wonder if every coarse $p$-limited set conditionally weakly compact. In the next proposition, we give a positive answer in the case $2 \leq p < \infty$.  Nevertheless, the result may fail for $p=1.$

\smallskip

\begin{prop} \label{plim7}
If~ $2 \leq p < \infty$ and if $A \subset X$ is a coarse $p$-limited set, then $A$ is conditionally weakly compact.
\end{prop}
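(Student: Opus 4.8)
The plan is to argue by contraposition, the key tool being Rosenthal's $\ell_1$ theorem: a bounded set $A$ is conditionally weakly compact exactly when it contains no sequence equivalent to the unit vector basis of $\ell_1$. (Note that a coarse $p$-limited set is automatically bounded: applying the definition to the rank-one operators $x\mapsto(x'(x),0,0,\dots)$ shows $A$ is weakly bounded, hence norm bounded.) So I would assume $A$ is \emph{not} conditionally weakly compact and manufacture a single operator $T\in L(X;\ell_p)$ for which $T(A)$ is not relatively compact, contradicting Definition \ref{plim4}. By Rosenthal's theorem there is a sequence $(x_n)\subset A$ equivalent to the $\ell_1$-basis; set $Y=\overline{\mathrm{span}}\,\{x_n\}$ and let $J\colon\ell_1\to Y$ be the isomorphism $Je_n=x_n$.

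On the subspace $Y$ the required operator is transparent: put $S=i_{1,p}\circ J^{-1}\colon Y\to\ell_p$, where $i_{1,p}\colon\ell_1\to\ell_p$ is the formal inclusion. Since $\|\cdot\|_p\le\|\cdot\|_1$, the map $i_{1,p}$ is bounded, so $S$ is a bounded operator with $Sx_n=e_n$, the unit vector basis of $\ell_p$. The value of $S$ is that its range on the sequence, $\{e_n\}$, is $2^{1/p}$-separated and hence not relatively compact in $\ell_p$. The whole difficulty is therefore to \emph{extend} $S$ from $Y$ to an operator on all of $X$: because $\ell_p$ is not injective for $p<\infty$, a general bounded operator on a subspace need not extend, and this is the main obstacle.

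This is precisely where the hypothesis $p\ge 2$ is used. I would show that $S$ is not merely bounded but absolutely ($1$-)summing. To see this, factor the inclusion as $\ell_1\xrightarrow{\,i_{1,2}\,}\ell_2\xrightarrow{\,i_{2,p}\,}\ell_p$; the second inclusion $i_{2,p}$ is bounded exactly because $p\ge 2$ (there $\|\cdot\|_p\le\|\cdot\|_2$), while by Grothendieck's theorem every operator from $\ell_1$ into a Hilbert space, in particular $i_{1,2}$, is $1$-summing. By the ideal property, $i_{1,p}=i_{2,p}\circ i_{1,2}$ is $1$-summing, and precomposing with the bounded map $J^{-1}$ shows that $S$ is $1$-summing as well. (For $p<2$ this factorization breaks down, which is consistent with the stated limitation of the result.)

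Finally I would invoke the extension theorem for absolutely summing operators: a $p$-summing operator defined on a subspace $Y$ of $X$ extends to a $p$-summing operator $X\to(\ell_p)''$ with the same summing norm. Since $\ell_p$ is reflexive, $(\ell_p)''=\ell_p$, so $S$ extends to some $T\in L(X;\ell_p)$ with $T|_Y=S$, whence $Tx_n=e_n$ for every $n$. Then $\{e_n:n\in\N\}\subset T(A)$ is not relatively compact, contradicting that $A$ is coarse $p$-limited; this contradiction proves the proposition. The two points needing care are the $1$-summing claim (Grothendieck's theorem combined with the $p\ge 2$ factorization through $\ell_2$) and the exact statement of the summing-extension theorem together with the reflexivity of $\ell_p$; everything else is routine.
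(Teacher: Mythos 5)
Your proposal follows essentially the same route as the paper's proof: Rosenthal's theorem to extract a sequence equivalent to the $\ell_1$-basis, the isomorphism of its closed span with $\ell_1$, the factorization $\ell_1\to\ell_2\to\ell_p$ (this is exactly where $p\ge 2$ enters), Grothendieck's theorem to get $1$-summability, the ideal property, an extension theorem to pass from the subspace $Y$ to all of $X$, and the non-compactness of $\{e_n\}$ in $\ell_p$ for the contradiction.

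There is, however, one imprecision at the crux, and it is exactly the point you flagged as ``needing care.'' The theorem you invoke --- ``a $p$-summing operator defined on a subspace $Y$ of $X$ extends to a $p$-summing operator $X\to(\ell_p)''$ with the same summing norm'' --- is not a standard result for general $p$; in particular there is no $1$-summing extension theorem. Extendibility is a special feature of \emph{$2$-summing} operators: the Pietsch factorization of a $2$-summing operator passes through a subspace of a Hilbert space $L_2(\mu)$, where the orthogonal projection supplies the extension, and this mechanism has no analogue in $L_1(\mu)$. The correct repair is precisely the step the paper inserts: by the Inclusion Theorem \cite[pg. 39]{diestelbook}, your $1$-summing operator $S=i_{1,p}\circ J^{-1}$ is in particular $2$-summing, and the $2$-summing extension theorem \cite[Theorem 4.15]{diestelbook} then yields a ($2$-summing) extension $T\in L(X;\ell_p)$ with values in $\ell_p$ itself --- no bidual and no appeal to reflexivity of $\ell_p$ is needed. (Alternatively, within your own setup, extend the $2$-summing operator $i_{1,2}\circ J^{-1}\colon Y\to\ell_2$ to $R\colon X\to\ell_2$ and take $T=i_{2,p}\circ R$.) With that one substitution your argument is complete and coincides with the paper's.
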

\begin{proof}
Let $A \subset X$ be a bounded set which is not conditionally weakly compact. Then there exists $(x_n) \subset A$ without  weakly Cauchy subsequences. By Roshental's theorem, there exists a subsequence $(x_{n_k})$ which is equivalent to the canonical basis $(e_k)$ of $\ell_1$. Consider $Y = \overline{[x_{n_k}]}$ and let $S: Y \to \ell_1$ be the linear isomorphism such that $S(x_{n_k}) = e_k$ for all $k \in \N$. Let $j: \ell_1 \to \ell_p$ be the natural inclusion. Note that $j$ can be written by $j = i_2 \circ i_1$ where $i_1: \ell_1 \to \ell_2$ and $i_2 = \ell_2 \circ \ell_p$ are canonical inclusions. By Grothendieck's theorem \cite[pg. 15]{diestelbook}, $i_1$ is a 1-summing operator. Consequently, $j$ is a 1-summing operator as well by the ideal property \cite[pg. 37]{diestelbook}. It follows from the Inclusion theorem \cite[pg. 39]{diestelbook} that $j$ is a 2-summing operator. Again by the ideal property, $T = j \circ S: Y \to \ell_p$ is a 2-summing operator such that $j(S(x_{n_k})) = e_k$ for all $k$. Thus, we can extend $T$ to the Banach space $X$ \cite[Theorem 4.15]{diestelbook}. Finally, if $\conj{x_{n_k}}{k \in \N}$ was a coarse $p$-limited set, then $(e_k)$ would be a coarse $p$-limited set in $\ell_p$ by Proposition \ref{plim6}-4, what contradicts Remark \ref{plim5}-3.
\qed
\end{proof}

\medskip

Recall that every bounded operator $T: C(K) \to \ell_1$ is compact, see for instance \cite[II.D. Exercise 5]{W}. Therefore, the unit ball of $C(K)$ is a coarse 1-limited set. Pe{\l}czy\'{n}ski and Semadeni \cite[Main theorem]{PS} proved that the unit ball of $C(K)$ is conditionally weakly compact if and only if $K$ is dispersed, that is, it contains no non-void perfect set.
 So the unit ball of $C([0,1])$ (and many others) is a coarse $1$-limited set that is not conditionally weakly compact.
Thus, Proposition \ref{plim7}  does not hold for all $p$'s.

\smallskip

\smallskip

By Proposition \ref{plim7}, we get that the class of coarse $p$-limited sets with $2 \leq p < \infty$ is an intermediate class between the limited sets and the conditionally weakly compact sets. Recall that a Banach space $X$ has the DP* property if and only if every weakly compact set in $X$ is limited \cite{cargalou}, or equivalently, every conditionally weakly compact set is limited \cite[Proposition 1.2]{cargalou}. Hence if X has the DP* property, then the classes of limited sets, coarse $p$-limited sets, with $2 \leq p < \infty$, and conditionally weakly compact sets coincide in $X$.

\smallskip

\begin{rema} \label{plim8} In general, one cannot establish an inclusion relationship between  the class of
coarse $p$-limited sets and the class of coarse q-limited sets for $p\neq q.$
\end{rema}
\begin{proof} We noticed above that $B_{\ell_2}$, is a coarse 1-limited which is not coarse 2-limited.

Let $X=L_p$ with $2\leq p,$ so it has type $2,$ (see \cite{LT} page 73), and hence enjoys property $P_{1/2},$ according to \cite[Theorem 5.1]{alencar}. Then the unit ball of $L_p$ is a coarse $\frac{3}{2}$-limited set since every $T:L_p \to \ell_{3/2}$ is compact as it follows from \cite[Corollary 3]{alencar}. However, since $L_p$ contains $\ell_2$ as a complemented subspace (\cite[Proposition 6.4.2]{AK}, for instance), the corresponding projection maps $B_{L_{p}}$ onto a neighborhood of $0,$ preventing this image from being relatively compact. Hence $B_{L_{p}}$ is not a coarse $2$-limited set. \qed
\end{proof}

\begin{prop} \label{plim9}
Let $1 \leq p, q < \infty$. Then, the unit ball $B_{\ell_q}$ is a coarse $p$-limited set if and only if $p < q$.
\end{prop}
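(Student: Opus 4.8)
The plan is to prove the two implications separately, using Pitt's theorem for sufficiency and a single explicit non-compact operator for necessity.

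For the sufficiency ($p < q \Rightarrow B_{\ell_q}$ is coarse $p$-limited) I would argue exactly as in Remark \ref{plim5}-2: when $p < q$, Pitt's theorem guarantees that every $T \in L(\ell_q; \ell_p)$ is compact, so $T(B_{\ell_q})$ is relatively compact for each such $T$, which is precisely the condition in Definition \ref{plim4}. Nothing more is needed here.

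For the necessity I would prove the contrapositive: assuming $p \geq q$, I would exhibit one bounded operator $T : \ell_q \to \ell_p$ for which $T(B_{\ell_q})$ fails to be relatively compact. The natural candidate is the formal inclusion $j : \ell_q \to \ell_p$, $j(x) = x$. First I would verify that $j$ is well defined and bounded: since $q \leq p$, normalizing $\|x\|_q = 1$ forces $|x_n| \leq 1$ for every $n$, whence $|x_n|^p \leq |x_n|^q$, and summing gives $\|x\|_p \leq \|x\|_q$, so $\|j\| \leq 1$. Then I would observe that $j$ carries the unit vector basis $(e_n)$ of $\ell_q$, which lies in $B_{\ell_q}$, onto the unit vector basis $(e_n)$ of $\ell_p$. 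Because $\|e_n - e_m\|_p = 2^{1/p}$ for $n \neq m$, this sequence has no Cauchy subsequence in $\ell_p$, so $j(B_{\ell_q})$ is not relatively compact. By Definition \ref{plim4}, $B_{\ell_q}$ is therefore not coarse $p$-limited.

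I do not anticipate a serious obstacle. The only point needing care is the direction of the inclusion $\ell_q \hookrightarrow \ell_p$, valid precisely for $q \leq p$; this is exactly what lets the single operator $j$ dispose of both cases $p = q$ and $p > q$ at once (when $p = q$, $j$ is merely the identity, and the conclusion also matches Remark \ref{plim5}-3, which identifies coarse $p$-limited subsets of $\ell_p$ with relatively compact ones). Combining the two implications yields the stated equivalence.
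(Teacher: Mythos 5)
Your proposal is correct and follows exactly the paper's own argument: Pitt's theorem (via Remark \ref{plim5}-2) for the direction $p<q$, and the formal inclusion $\ell_q \hookrightarrow \ell_p$ applied to the unit vector basis for $p \geq q$. The only difference is that you spell out the boundedness of the inclusion and the $2^{1/p}$-separation of the basis, details the paper leaves implicit.
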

\begin{proof}
We proved in Remark \ref{plim5}-2 that if $p < q$, then  $B_{\ell_q}$ is a coarse $p$-limited set in $\ell_q$. Reciprocally, let $p \geq q$ and let $(e_k) \subset B_{\ell_q}$ be the canonical basis. If $T: \ell_q \to \ell_p$ is the inclusion map, then $Te_k = e_k$ for all $k$. Since $(e_k)$ is not relatively compact in $\ell_p$, it follows that $(e_k)$ is not a coarse $p$-limited in $\ell_q$. Hence $B_{\ell_q}$ cannot be coarse $p$-limited if $p \geq q$. \qed
\end{proof}

\smallskip

\begin{prop} \label{plim10}
The class of coarse $p$-limited sets satisfies the Grothendieck's encapsulating property, i.e. if for every $\epsilon>0$ there is a coarse $p$-limited set $A_\epsilon \subset X$ such that $A\subset A_\epsilon +\epsilon B_X,$ then $A$ is a coarse $p$-limited set.
\end{prop}

\begin{proof}
If $T: X \to \ell_p$ is a bounded operator, then $T(A) \subset T(A_{\epsilon}) + \epsilon T(B_X)$. Since $A_{\epsilon}$ is coarse $p$-limited, we have that $T(A_{\epsilon})$ is relatively compact. Consequently, $T(A)$ is relatively compact as well  what implies that $A$ is a coarse $p$-limited set. \qed
\end{proof}
\smallskip

We recall now the notion of \textit{polynomial} on a Banach space $X$ for the reader's convenience. 
A $k$-homogeneous polynomial $P: X\to Y,$ $Y$ a normed space, is the restriction to the diagonal in $X^k$ of some $k$-linear continuous mapping $A\in L(^k X;Y),$ that is, $P(x)=A(x, \stackrel{k}\dots, x),\, x\in X.$

\begin{prop} \label{plim11} Let $N\in\mathbb{N}$ and let $(x_n)\subset X$ be such that $\lim_n P(x_n)=P(x)$ for some $x\in X$ and all  $P\in \mathcal{P}(^{\leq N}X),$ the space of scalar polynomials of degree not greater than $N.$ Then $\{x_n: n\in \mathbb{N}\}$ is  a coarse $p$-limited set in $X$ for $p\leq N.$
\end{prop}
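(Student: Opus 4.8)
The plan is to verify the definition directly: show that for every $T\in L(X;\ell_p)$ the image $T(\{x_n\})$ is relatively compact in $\ell_p$. First I would reduce to the case where the limit is $0$. Translations preserve coarse $p$-limitedness, since $T(A+v)=T(A)+Tv$ is relatively compact whenever $T(A)$ is; moreover, setting $z_n:=x_n-x$, one still has $P(z_n)\to P(0)$ for every $P\in\mathcal{P}(^{\leq N}X)$, because replacing $P$ by $w\mapsto P(w-x)$ yields a polynomial of the same degree to which the hypothesis applies. So it suffices to treat a sequence $(z_n)$ with $P(z_n)\to P(0)$ for all such $P$. Taking $P$ of degree one gives $f(z_n)\to 0$ for every $f\in X'$, so $(z_n)$ is weakly null and, in particular, bounded.

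Then I argue by contradiction. Suppose some $T\in L(X;\ell_p)$ has $T(\{z_n\})$ not relatively compact. Since $Tz_n\to 0$ weakly, failure of relative compactness forces $Tz_n\not\to 0$ in norm, so after passing to a subsequence $\|Tz_{n_j}\|_p\ge\varepsilon>0$. Now $(Tz_{n_j})$ is a seminormalized weakly null sequence in $\ell_p$, so by the Bessaga--Pe\l czy\'nski selection principle \cite{AK} a further subsequence is equivalent to the unit vector basis $(e_k)$ and, after a small perturbation, may be taken to be a block basis. Since block subspaces of $\ell_p$ are complemented, combining the isomorphism with the attendant projection yields a bounded operator $A\colon\ell_p\to\ell_p$ with $A\,Tz_{n_j}=e_j$ for all $j$. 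Set $B:=A\circ T\in L(X;\ell_p)$, so that $Bz_{n_j}=e_j$.

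The final step produces a single scalar polynomial separating the subsequence from its limit. Let $m:=\lceil p\rceil$; then $p\le m\le N$, the latter because $N\ge p$ is an integer. The map $\Psi_m\colon\ell_p\to\mathbb{K}$, $\Psi_m(w)=\sum_k w_k^m$, is a well-defined continuous $m$-homogeneous polynomial, being the restriction to the diagonal of the symmetric $m$-linear form $(w^1,\dots,w^m)\mapsto\sum_k w^1_k\cdots w^m_k$, whose continuity on $\ell_p$ follows from the generalized H\"older inequality together with $\ell_p\subset\ell_m$ (here $m\ge p$ is essential). Hence $Q:=\Psi_m\circ B$ is a scalar polynomial on $X$ of degree $m\le N$, so $Q\in\mathcal{P}(^{\leq N}X)$, and the hypothesis gives $Q(z_n)\to Q(0)=\Psi_m(0)=0$. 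On the other hand $Q(z_{n_j})=\Psi_m(e_j)=\sum_k(e_j)_k^m=1$ for every $j$, contradicting the convergence of the subsequence $\bigl(Q(z_{n_j})\bigr)$ to $0$. Therefore $T(\{z_n\})$ is relatively compact for every $T$, and $\{x_n\}$ is coarse $p$-limited.

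I expect the main obstacle to be the construction of the operator $B$, that is, converting the failure of relative compactness into a clean operator sending the subsequence onto the unit vector basis; this is exactly where the $\ell_p$-structure (Bessaga--Pe\l czy\'nski, block bases, complementation) enters, in the same spirit as the copy of $\ell_1$ and the extension of a summing operator exploited in the proof of Proposition \ref{plim7}. The polynomial bookkeeping afterwards is routine, the only delicate point being the choice $m=\lceil p\rceil$, which simultaneously guarantees convergence of $\Psi_m$ on $\ell_p$ (needing $m\ge p$) and membership of $Q$ in $\mathcal{P}(^{\leq N}X)$ (needing $m\le N$).
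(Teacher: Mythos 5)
Your proof is correct, but it follows a genuinely different route from the paper's. The paper also transfers the hypothesis to $\ell_p$ by composing polynomials on $\ell_p$ with $T\in L(X;\ell_p)$, but it then removes the limit point by an inductive (recurrence) argument showing $\lim_n Q\bigl(T(x_n)-T(x)\bigr)=0$ for all $Q\in\mathcal{P}(^{\leq N}\ell_p)$, and finally invokes \cite[6.3. Theorem]{CCG} as a black box: on $\ell_p$, convergence against all polynomials of degree $\leq N$ with $N\geq p$ implies norm convergence, so $T(x_n)\to T(x)$ in norm and the image is relatively compact. You do two things differently. First, you center the sequence by the substitution $P\mapsto P(\cdot-x)$ on $X$ itself, which is cleaner than the paper's recurrence and would in fact simplify the paper's own argument. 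Second, and more substantially, you avoid the Carne--Cole--Gamelin citation entirely and reprove the needed instance of it by hand: the Bessaga--Pe{\l}czy\'{n}ski selection principle together with the complementability of block subspaces of $\ell_p$ produces the operator $B$ with $Bz_{n_j}=e_j$, and the explicit power-sum polynomial $\Psi_m(w)=\sum_k w_k^m$ with $m=\lceil p\rceil$ yields the contradiction $1\to 0$; this polynomial is continuous on $\ell_p$ precisely because $m\geq p$ and lies in $\mathcal{P}(^{\leq N}X)$ after composition precisely because $p\leq N$, so your argument makes transparent where the degree restriction enters, whereas in the paper it is hidden inside the quoted theorem. What the paper's approach buys is brevity (one citation replaces your entire extraction-and-separation argument); what yours buys is a self-contained, elementary proof using only standard $\ell_p$ sequence-space facts, very much in the spirit of the paper's own proof of Proposition \ref{plim7}. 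One cosmetic remark: for $p=1$ your Bessaga--Pe{\l}czy\'{n}ski step is applied to a seminormalized weakly null sequence in $\ell_1$, which cannot exist by the Schur property; the reasoning remains valid inside the contradiction hypothesis, but there the contradiction could be reached immediately.
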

\begin{proof} 
The assumption means that for every polynomial $Q\in \mathcal{P}(^{\leq N}\ell_p)$ and any $T\in L(X,\ell_p),$  $\lim_n (Q\circ T)(x_n)=(Q\circ T)(x).$
Using a recurrence argument we deduce that $\lim_n Q\big(T(x_n)- T(x)\big)=0$ for all $Q\in \mathcal{P}(^{\leq N} \ell_p).$ Indeed, its immediate that $\lim_n Q\big(T(x_n)- T(x)\big)=0$ for all $Q\in \mathcal{P}(^{1} \ell_p) = \ell_p'$. If $Q \in P(^2 \ell_p)$ and $A$ is its associated 2-linear form,  we obtain that
\begin{align*}
    Q(Tx_n - Tx) & =A(Tx_n - Tx, Tx_n - Tx) \\
        & = A(Tx_n, Tx_n - Tx) - A(Tx, Tx_n - Tx) \\
        & = A(Tx_n, Tx_n) - A(Tx_n, Tx)- A(Tx, Tx_n - Tx).
\end{align*}
We have that
$\lim_n A(Tx_n, Tx_n) = \lim_n Q(Tx_n) = \lim_n Q(Tx) = \lim_n A(Tx, Tx)$, $\lim_n A(Tx_n, Tx) = \lim_n A_{Tx} (Tx) = \lim_n A_{Tx} (Tx) = \lim_n A(Tx, Tx)$ and
$\lim_n A(Tx, Tx_n - Tx) = \lim_n A_{Tx} (Tx_n - Tx) = 0$, what implies that
$$ \lim_n Q(Tx_n - Tx) = 0.$$
In a very similar way, one can show that
 assuming that $\lim_n Q\big(T(x_n)- T(x)\big)=0$ for all $Q\in \mathcal{P}(^{\leq N - 1} \ell_p)$, one can check that $\lim_n Q\big(T(x_n)- T(x)\big)=0$ for all $Q\in \mathcal{P}(^{N} \ell_p).$ We refrain from giving more details on this because the argument is the one already used.

Hence by \cite[6.3. Theorem]{CCG}, we conclude that $T(x_n) \to T(x).$ Since the set $\{T(x_n): n\in \mathbb{N}\}$ is relatively compact, we are done. \qed
\end{proof}
\medskip

The above result fails for $p>N,$ as shown by Pe{\l}czy\'{n}ski-Pitt theorem \cite[4.1]{alencar} because all $k$-linear forms on $\ell_p,$ with $k\leq N,$ are weakly sequentially continuous, thus they transform the unit basis into a null sequence. However, the unit basis is not coarse $p$-limited in $\ell_p.$

\smallskip
As a consequence of Proposition \ref{plim11}, if  on the Banach space $X$ all polynomials are weakly sequentially continuous, then every weakly convergent sequence in $X$ is a coarse $p$-limited sequence for all $1\leq p <\infty.$ Such Banach spaces $X$ were called $\mathcal{P}$-spaces  in \cite{CGG}. Among them, one finds those having the Dunford-Pettis property and others, remarkably without the  Dunford-Pettis property, like the dual of  Schreier's space or some Lorentz sequence space $d(w;1),$ see \textit{loc. cit.}

\smallskip

We conclude this section by showing that the coarse $p$-limited sets are not preserved by polynomials.

\begin{ex} \label{pol1}
Consider the polynomial $P: \ell_2 \to \ell_1$ given by $P((a_n)_n) = (a_n^2)_n$. By Remark \ref{plim5}, $B_{\ell_2}$ is coarse 1-limited in $\ell_2$. On the other hand, the set $P(B_{\ell_2})$ is not coarse 1-limited in $\ell_1$. Indeed, the unit basis $(e_n)$ is contained in $P(B_{\ell_2})$, since $P(e_n) = e_n.$ However it does not have any convergent subsequence in $\ell_1$.
\end{ex}

\section{Relation to compactness and weak compactness}

In this section, we seek to establish DP*-type and the Gelfand-Phillips type properties concerning the class of coarse $p$-limited sets. We begin with a new DP*-type property:

\begin{df} \label{pdp*1}
We say that a Banach space $X$ has the \textsl{coarse $p$-DP* property } if every relatively weakly compact set is coarse $p$-limited.
\end{df}

Since every limited set is coarse $p$-limited, every Banach space with the DP* property has the coarse $p$-DP* property  for all $1 \leq p < \infty$. In the course of the proof of Remark \ref{plim8}, we verified that $L_p$ does not have the coarse $2$-DP* property for $p \geq 2$.

\smallskip
We note that if $B_X$ is a coarse $p$-limited set, then $X$ has the coarse $p$-DP* property. However, this condition does not characterize the coarse $p$-DP* property. Indeed,
$\ell_1$ has the coarse $1$-DP* property, but $B_{\ell_1}$ is not a coarse 1-limited set.

\smallskip

Recall that reflexive infinite dimensional Banach spaces cannot have the DP* property. As a consequence of Proposition \ref{plim9}, we get that $\ell_q$ has the coarse $p$-DP* property  if and only if $1 \leq p < q < \infty$. Besides, as $B_{c_0}$ is a coarse $p$-limited set for all $1 \leq p < \infty$, we have that $c_0$ has the coarse $p$-DP* property  for all $1 \leq p < \infty$, although $c_0$ does not have the DP* property.

\begin{rema} \label{rema1} Banach spaces with property $\mathcal{P}$ have the coarse $p$-DP* property for all  $1 \leq p < \infty.$ In particular the Dunford-Pettis property implies the coarse $p$-DP* property for all  $1 \leq p < \infty.$ \end{rema}
  \begin{proof}
 Let $A\subset X$ be a weakly compact set. If $A$ is not  coarse $p$-limited, there is $T:X\to \ell_p$ and a sequence $(a_n)\subset A$ such that $(T(a_n))$ does not have convergent subsequence. Choose a weakly convergent subsequence  $(a_{n_k})\subset A.$ According to Proposition \ref{plim11},  $(a_{n_k})$ is coarse $p$-limited, hence  $T(a_{n_k})$ is a relatively compact sequence. A contradiction.\qed
  \end{proof}
\smallskip

As a consequence of Remark \ref{rema1}, we get that $L_1[0,1]$ has the coarse p-DP* property for all $1 \leq p < \infty$, even though $L_1[0,1]$ does not have the DP* property.

\smallskip

Our next Theorem resembles a similar characterization in \cite{cargalou}.

\begin{teo} \label{pdp*2}
A Banach space $X$ has the coarse $p$-DP* property
if and only if every bounded operator $T: X \to \ell_p$ is completely continuous.
\end{teo}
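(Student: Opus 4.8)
The plan is to prove both implications directly from the definitions, bridging the weak and norm topologies by means of the Eberlein--\v{S}mulian theorem in one direction and an elementary subsequence argument in the other.

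For the sufficiency direction, I would assume that every $T \in L(X;\ell_p)$ is completely continuous and show that an arbitrary relatively weakly compact set $A \subset X$ is coarse $p$-limited. Fixing $T \in L(X;\ell_p)$, it suffices to verify that $T(A)$ is relatively norm compact by a sequential argument. Given $(x_n) \subset A$, the Eberlein--\v{S}mulian theorem provides a subsequence $x_{n_k} \rightharpoonup x$ converging weakly, and complete continuity of $T$ then forces $T x_{n_k} \to T x$ in norm. Hence every sequence in $T(A)$ admits a norm-convergent subsequence, so $T(A)$ is relatively compact in $\ell_p$. As $T$ was arbitrary, $A$ is coarse $p$-limited, and therefore $X$ has the coarse $p$-DP* property.

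For the necessity direction, I would assume the coarse $p$-DP* property, fix $T \in L(X;\ell_p)$ and a weakly convergent sequence $x_n \rightharpoonup x$ in $X$, and prove $T x_n \to T x$ in norm. The set $A = \{x_n : n \in \N\} \cup \{x\}$ is weakly compact, hence coarse $p$-limited by hypothesis, so $T(A)$ is relatively norm compact in $\ell_p$. Since $T$ is weak-to-weak continuous, $T x_n \rightharpoonup T x$. The step that finishes the argument is that a weakly convergent sequence lying in a relatively norm compact set must converge in norm to its weak limit: were $T x_n \not\to T x$, one could extract a subsequence bounded away from $T x$; relative compactness would furnish a further subsequence converging in norm to some $y$, and uniqueness of weak limits would give $y = T x$, a contradiction. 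Thus $T x_n \to T x$ and $T$ is completely continuous.

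The main obstacle in both directions is precisely the passage between the weak and norm topologies. In the sufficiency direction one converts relative weak compactness into relative norm compactness through complete continuity, relying on the sequential characterization of weak compactness; in the necessity direction one upgrades weak convergence to norm convergence using relative compactness together with uniqueness of weak limits. Both arguments are standard but require careful subsequence extraction, and it is worth noting that the equivalence holds for every $1 \leq p < \infty$ without any special role for the case $p \geq 2$.
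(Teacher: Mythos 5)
Your proof is correct in both directions, and the sufficiency direction (completely continuous operators imply the coarse $p$-DP* property) is essentially identical to the paper's: extract a weakly convergent subsequence via Eberlein--\v{S}mulian and apply complete continuity. Where you genuinely diverge is in the necessity direction. The paper works inside $\ell_p$ concretely: it writes $T(x) = (x_n'(x))$ for a weak* $p$-summable sequence $(x_n') \subset X'$, invokes the characterization of relatively compact subsets of $\ell_p$ as bounded sets with uniformly small tails, and then combines the uniform tail estimate with coordinatewise convergence $x_i'(x_k) \to 0$ to conclude $\|Tx_k\|_p \to 0$ for weakly null $(x_k)$. You instead use the abstract lemma that a weakly convergent sequence contained in a relatively norm compact set must converge in norm to its weak limit (via the subsequence-plus-uniqueness-of-weak-limits argument), applied to $T(A)$ where $A = \{x_n\} \cup \{x\}$. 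Your route is shorter and makes no use of the coordinate structure of $\ell_p$ at all; it shows the underlying principle is completely general --- any operator mapping relatively weakly compact sets to relatively norm compact sets is completely continuous, whatever the target space. What the paper's computation buys is an explicit display of how the weak* $p$-summable representation of operators into $\ell_p$ interacts with compactness, which is the recurring mechanism in the rest of the paper (e.g.\ in the proof of Theorem 3.5); your argument hides that mechanism but is cleaner as a standalone proof. Both are complete, and your closing observation that no restriction to $p \geq 2$ is needed is accurate.
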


\begin{proof}
Assume that $X$ has the coarse $p$-DP* property  and let $T: X \to \ell_p$ be a bounded operator. Let $(x_n') \subset X'$ be the weak* $p$-summable sequence such that $T(x) = (x_n'(x))$. If $x_k \cvf 0$ in $X$, then $\{ x_k\}$ is a relatively weakly compact set, hence a coarse $p$-limited subset of $X$, what implies that $T(\{x_k\})$ is a relatively compact subset of $\ell_p$, i.e. $\sup \conj{\sum_{i=n}^\infty |x_i'(x_k)|^p}{k \in \N} \to 0$.
Given $\epsilon > 0$, there exists $n_0 \in \N$ such that $\sum_{i = n_0 + 1}^\infty |x_i'(x_k)|^p < \epsilon/2$ for all $k \in \N$. Since $x_k \cvf 0$, $|x_i'(x_k)| \to 0$ as $k \to \infty$ for all $i \in \N$. In particular, for each $i = 1, \dots, n_0$, there exists $k_i$ such that $|x_i'(x_k)| < \epsilon/(2n_0)$ for all $k \geq k_i$. If $k_0 = \max \{ k_1, \dots, k_{n_0} \}$ , we get that
\begin{align*}
    \norma{Tx_k}_p ^p & = \sum_{i=1}^\infty |x_i'(x_k)|^p = |x_1'(x_k)|^p + \cdots + |x_{n_0}'(x_k)|^p + \sum_{i = n_0 + 1}^\infty |x_i'(x_k)|^p
         < \epsilon,
\end{align*}
for all $k \geq k_0$. Hence $\norma{Tx_k}_p \to 0$ as $k \to \infty$.

Conversely, assume that every bounded operator from $X$ into $\ell_p$ is completely continuous. Let $A \subset X$ be a relatively weakly compact set and let $T: X \to \ell_p$ be a bounded operator. If $(x_n) \subset A$, then $(x_n)$ has a weakly convergent subsequence, what implies that $(Tx_n)$ has a convergent subsequence in $\ell_p$. Hence $T(A)$ is a relatively compact set. Thus $A$ is a coarse $p$-limited set.
\qed
\end{proof}

\medskip

It was proved in \cite{cargalou} that a Banach space $X$ has the DP* property if and only if every conditionally weakly compact set in $X$ is limited. We ask ourselves for a similar characterization for the coarse $p$-DP* property.

\begin{teo} \label{pdp*3} A Banach space $X$ has the coarse $p$-DP* property if, and only if,
   every conditionally weakly compact subset of $X$ is coarse p-limited.
       \end{teo}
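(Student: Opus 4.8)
The plan is to route the nontrivial implication through the operator characterization of Theorem~\ref{pdp*2}, since the reverse implication comes essentially for free from the inclusion between the two compactness notions.

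First I would dispatch the ``if'' direction. Every relatively weakly compact set is conditionally weakly compact: by the Eberlein--\v{S}mulian theorem any sequence in such a set has a weakly convergent subsequence, and a weakly convergent sequence is in particular weakly Cauchy. Hence, if every conditionally weakly compact subset of $X$ is coarse $p$-limited, then so is every relatively weakly compact one, and $X$ has the coarse $p$-DP* property by Definition~\ref{pdp*1}.

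For the ``only if'' direction I would assume that $X$ has the coarse $p$-DP* property, so that by Theorem~\ref{pdp*2} every bounded operator $T\colon X\to\ell_p$ is completely continuous. The key point to isolate is that a completely continuous operator into $\ell_p$ maps weakly Cauchy sequences to norm convergent ones. To see this, let $(y_k)$ be weakly Cauchy in $X$ and suppose, toward a contradiction, that $(Ty_k)$ is not norm Cauchy; since $\ell_p$ is complete this yields an $\epsilon>0$ and indices $k_i,l_i\to\infty$ with $\|Ty_{k_i}-Ty_{l_i}\|\geq\epsilon$. But $z_i:=y_{k_i}-y_{l_i}$ converges weakly to $0$, because $f(y_{k_i})$ and $f(y_{l_i})$ share the same limit for every $f\in X'$; complete continuity then forces $\|Tz_i\|\to 0$, contradicting the gap. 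Thus $(Ty_k)$ is norm Cauchy, hence convergent. With this lemma in hand, let $A\subset X$ be conditionally weakly compact and $T\colon X\to\ell_p$ bounded: any sequence $(x_n)\subset A$ has a weakly Cauchy subsequence $(x_{n_k})$, so $(Tx_{n_k})$ converges in $\ell_p$. Therefore every sequence in $T(A)$ admits a convergent subsequence, $T(A)$ is relatively compact, and $A$ is coarse $p$-limited.

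I expect the sole genuine obstacle to be the lemma that completely continuous operators send weakly Cauchy sequences to norm convergent sequences; the subtlety is that a weakly Cauchy sequence need not converge weakly in $X$, so one cannot apply complete continuity to the sequence itself, only to the weakly null differences $y_{k_i}-y_{l_i}$. Once that is settled, everything else reduces to Theorem~\ref{pdp*2} together with the standard comparison between relatively weakly compact and conditionally weakly compact sets. This mirrors the analogous characterization of the DP* property via conditionally weakly compact and limited sets recalled from \cite{cargalou}.
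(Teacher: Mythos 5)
Your proof is correct, and it takes a genuinely different route from the paper's. The paper never invokes Theorem~\ref{pdp*2}: it argues directly from the definition of the coarse $p$-DP* property by contradiction, writing $T(x)=(x_n'(x))$ and working with the tail-sum criterion for relative compactness in $\ell_p$. Concretely, it extracts a sequence $(x_n)\subset A$ whose tails $\sum_{i\geq n}|x_i'(x_n)|^p$ stay bounded below, passes to a weakly Cauchy subsequence, and shows via the reverse triangle inequality that the differences $x_{i_n}-x_n$ have uniformly large tail norms — contradicting the fact that this weakly null (hence relatively weakly compact, hence coarse $p$-limited) set must have uniformly small tails. Your proof factors the same underlying trick — differences of a weakly Cauchy sequence along divergent index sequences are weakly null — through two cleaner pieces: the operator characterization of Theorem~\ref{pdp*2}, and the isolated lemma that a completely continuous operator sends weakly Cauchy sequences to norm convergent ones. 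What your packaging buys is brevity, reusability of the lemma, and no coordinatewise computations in $\ell_p$; you also correctly flag and handle the one genuine subtlety (one cannot apply complete continuity to the weakly Cauchy sequence itself, only to its weakly null differences). What the paper's bare-hands version buys is that the explicit estimate is recycled almost verbatim later: the proof of Theorem~\ref{op5} is advertised as "the argument used in the proof of Theorem~\ref{pdp*3}", applied there to sequences in $B_X$ where no compactness is available and Rosenthal's theorem must be invoked, so the concrete tail estimates are not dead weight in the paper's economy. A further small difference: you spell out the easy "if" direction via Eberlein--\v{S}mulian, which the paper leaves implicit.
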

\begin{proof}
Assume that $A \subset X$ is a conditionally weakly compact set that is not coarse $p$-limited. Then, there exists a bounded operator $T: X \to \ell_p$ such that $T(A)$ is not a relatively compact set in $\ell_p$. If $T(x) = (x_n'(x))$, we have that the decreasing sequence
$$ s_n = \sup \conj{\sum_{i=n}^\infty |x_i'(x)|^p}{x \in A} $$
is not null. So there is $\epsilon>0,$ such that $s_n > \epsilon^p$ for all $n$. This implies that there exists $(x_n) \subset A$ such that $\sum_{i=n}^\infty |x_i'(x_n)|^p \geq \epsilon^p$ for all $n$. Since $A$ is a conditionally weakly compact set, we can assume, without loss of generality, that $(x_n)$ is a weak Cauchy sequence.
As $\sum_{i=1}^\infty |x_i'(x_n)|^p < \infty$, there exists $i_n \geq n$ such that $\sum_{i \geq i_n} |x_i'(x_n)|^p < (\epsilon/2)^p$, thus by the second triangle inequality
\begin{align*}
    \|\big(x_i'(x_{i_n} - x_{n})\big)_{i=i_n}\|_p & \geq \|\big(x_i'(x_{i_n})\big)_{i=i_n}\|_p - \|\big(x_i'( x_{n})\big)_{i=i_n}\|_p \\
        & = \Big(\sum_{i = i_n}^\infty |x_i'(x_{i_n})|^p\Big)^{1/p}-\Big(\sum_{i = i_n}^\infty |x_i'(x_{n})|^p\Big)^{1/p} \\
        & > \epsilon/2
\end{align*}
On the other hand, since $\{x_{i_n} - x_n\}_n$ is a relatively weakly compact set, it must be coarse $p$-limited, what implies that
$$ \sup \conj{\sum_{i=k}^\infty |x_i'(x_{i_n} - x_n)|^p}{n \in \N} \to 0. $$
This yields
a contradiction.
\qed\end{proof}

\begin{cor} \label{pdp*4}
If $X$ does not contain a copy of $\ell_1$ and $X$ has the coarse p-DP* property, then $B_X$ is a coarse p-limited set.
\end{cor}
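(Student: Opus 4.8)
The plan is to combine Rosenthal's $\ell_1$ theorem with the characterization of the coarse $p$-DP* property provided by Theorem \ref{pdp*3}. The key observation is that the hypothesis ``$X$ contains no copy of $\ell_1$'' is precisely what promotes every bounded set to a conditionally weakly compact one, so the two assumptions feed directly into each other.

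First I would invoke Rosenthal's $\ell_1$ theorem: since $X$ does not contain an isomorphic copy of $\ell_1$, every bounded sequence in $X$ admits a weakly Cauchy subsequence. In particular, every sequence in the closed unit ball $B_X$ has a weakly Cauchy subsequence, which is exactly the assertion that $B_X$ is a conditionally weakly compact set.

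Then I would apply Theorem \ref{pdp*3}: because $X$ has the coarse $p$-DP* property, every conditionally weakly compact subset of $X$ is coarse $p$-limited. Applying this to the conditionally weakly compact set $B_X$ yields at once that $B_X$ is coarse $p$-limited, which finishes the argument.

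I do not anticipate a genuine obstacle in this corollary; it is a direct consequence of the two ingredients above. The only point that requires a little care is citing Rosenthal's theorem in the precise form ``$X$ contains no copy of $\ell_1$ if and only if every bounded sequence in $X$ has a weakly Cauchy subsequence,'' since it is this equivalence, rather than the weaker statement about relatively weakly compact sets, that converts boundedness of $B_X$ into the conditional weak compactness needed to trigger Theorem \ref{pdp*3}.
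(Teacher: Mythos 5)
Your proof is correct and is exactly the paper's argument: the paper likewise notes that the absence of a copy of $\ell_1$ makes $B_X$ conditionally weakly compact (via Rosenthal's theorem) and then applies Theorem \ref{pdp*3}. Your write-up merely spells out the Rosenthal step in more detail than the paper does.
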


\begin{proof}
If $X$ does not contain copy of $\ell_1$, then $B_X$ is a conditionally weakly compact set, what implies that $B_X$ is a coarse p-limited set by Theorem \ref{pdp*3}. \qed
\end{proof}

\smallskip

Recall that a Banach space $X$ has the DP* property if and only if every polynomial $P \in \mathcal{P}(X; c_0)$ is completely continuous. We cannot generalize this fact to the coarse $p$-DP* property. Indeed, we know that $\ell_2$ has the coarse $1$-DP* property, even though the polynomial $P: \ell_2 \to \ell_1$ in Example \ref{pol1} is not completely continuous.

\smallskip

Now we study the relation between coarse $p$-limited sets and relatively compact sets.

\begin{df} \label{pgp1}
A Banach space $X$ is said to have the {\textsl{coarse p-Gelfand-Phillips property }} if every coarse $p$-limited subset of $X$ is relatively compact.
\end{df}

\begin{rema} \label{pgp2}
\begin{enumerate}
    \item We begin by showing that the Gelfand-Phillips property does not imply the coarse p-Gelfand-Phillips property. Indeed, we already know that the unit ball of $\ell_2$ is a coarse 1-limited set that is not relatively compact. Consequently, $\ell_2$ is a reflexive Banach space (hence it has the Gelfand-Phillips property) that does not have the coarse 1-Gelfand-Phillips property.

    \item It follows from Remark \ref{plim5}-3 that for $1\le p <\infty,$ $\ell_p(\Gamma),\;  \Gamma \text{ any index set, }$ has the coarse p-Gelfand-Phillips property. {\rm Recall that any element in $\ell_p(\Gamma)$ vanishes outside a countable subset of $\Gamma,$ so the same happens for a sequence. Therefore if $(a_n)$ is a coarse $p$-limited sequence in $\ell_p(\Gamma),$ it lies in a (complemented) copy of $\ell_p.$ As $(a_n)$ is a coarse $p$-limited sequence in $\ell_p,$ it is a relatively compact sequence in $\ell_p\subset \ell_p(\Gamma).$}

    \item Since limited sets are coarse $p$-limited, we get that if $X$ has the coarse p-Gelfand-Phillips property, for some $1 \leq p < \infty$, then $X$ has the Gelfand-Phillips property. In particular, $\ell_\infty$ does not have the coarse p-Gelfand-Phillips property  for any $1 \leq p < \infty$.

    \item By Example \ref{plim3}, the unit ball of $c_0$ is a coarse $p$-limited set for all $1 \leq p < \infty$. Consequently, $c_0$ does not have the coarse p-Gelfand-Phillips property.


\end{enumerate}
\end{rema}

It is known that a Banach space $X$ has the Gelfand-Phillips property if and only if every weakly null limited sequence in $X$ is norm null. Here the sufficiency follows because limited sets are conditionally weakly compact. In the next result, we present a similar characterization when $2 \leq p < \infty$.

\begin{teo} \label{pgp3}
If a Banach space $X$  has the coarse p-Gelfand-Phillips property, then every weakly null coarse $p$-limited sequence in $X$ is norm null. The converse holds if $2 \leq p < \infty$.
\end{teo}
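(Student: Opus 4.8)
The plan is to treat the two implications separately, bearing in mind that the hypothesis $2 \leq p < \infty$ is needed only for the converse, while the forward implication should go through for every $1 \leq p < \infty$.

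For the forward implication, I would take a weakly null coarse $p$-limited sequence $(x_n)$, so that $\{x_n : n \in \N\}$ is a coarse $p$-limited set. The coarse $p$-Gelfand-Phillips property then forces this set to be relatively compact, and it remains to observe that a relatively compact weakly null sequence must be norm null. This last point is routine: if some subsequence stayed bounded away from $0$ in norm, relative compactness would produce a norm-convergent sub-subsequence with a nonzero limit, which, being simultaneously the weak limit, would contradict weak nullity.

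For the converse, I would assume every weakly null coarse $p$-limited sequence is norm null, fix a coarse $p$-limited set $A$, and argue that $A$ is relatively compact by contradiction. The crucial structural ingredient is Proposition \ref{plim7}: since $2 \leq p$, the set $A$ is conditionally weakly compact. If $A$ were not relatively compact it would not be totally bounded, so I could build inductively a $\delta$-separated sequence $(a_n) \subset A$, i.e. $\|a_n - a_m\| \geq \delta$ for $n \neq m$. Conditional weak compactness then lets me pass to a weakly Cauchy subsequence, which remains $\delta$-separated.

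The decisive step is a pairing trick: setting $z_k = a_{n_{2k}} - a_{n_{2k-1}}$, the sequence $(z_k)$ is weakly null because the underlying subsequence is weakly Cauchy, it satisfies $\|z_k\| \geq \delta$ because of the separation, and it is coarse $p$-limited because $\{z_k\} \subset A - A$ and $A - A$ is coarse $p$-limited by Proposition \ref{plim6} (items 3 and 4). This exhibits a weakly null coarse $p$-limited sequence that is not norm null, contradicting the hypothesis, so $A$ is relatively compact. The main obstacle, and precisely where $p \geq 2$ enters, is manufacturing from a non-relatively-compact set a weakly null coarse $p$-limited sequence that stays bounded away from $0$: the differences of a weakly Cauchy sequence are the natural source of weak nullity, but one must keep them both norm-separated and inside a coarse $p$-limited set, and without the conditional weak compactness of Proposition \ref{plim7} (which can fail for $p=1$) there is no weakly Cauchy subsequence to exploit in the first place.
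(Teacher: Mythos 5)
Your proposal is correct and follows essentially the same route as the paper: the forward direction is the immediate observation that a weakly null sequence forming a relatively compact set is norm null, and the converse extracts a separated sequence from a non-relatively-compact coarse $p$-limited set, uses Proposition \ref{plim7} (where $p\geq 2$ enters) to get a weakly Cauchy subsequence, and then forms differences lying in the coarse $p$-limited set $A-A$ to contradict the hypothesis. The only cosmetic deviation is that you pair disjoint terms $a_{n_{2k}}-a_{n_{2k-1}}$ where the paper uses consecutive differences $x_n-x_{n+1}$; both serve identically.
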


\begin{proof}
Assume that $X$ has the coarse p-Gelfand-Phillips property  and let $(x_n) \subset X$ be a weakly null coarse $p$-limited sequence. In particular, $\{ x_n \}$ is a relatively compact set such that $x_n \cvf 0$, thus $\norma{x_n} \to 0$.

Now assume that $2 \leq p < \infty$ and let $A \subset X$ be a coarse $p$-limited set. Assume that $A$ is not compact. Then there is $\epsilon>0$ and a sequence $(x_n) \subset A$ such that $\|x_n-x_m\|>\epsilon$ for $n\neq m.$ By Proposition \ref{plim7}, $(x_n)$ has a weakly Cauchy subsequence, that will also be denoted by $(x_n)$. Now, by Proposition \ref{plim6}-3, the set $A - A$ is also coarse $p$-limited. Consequently, we get that the weakly null sequence $(x_n - x_{n+1})$ is coarse $p$-limited, what implies that $\norma{x_n - x_{n+1}} \to 0.$ This leads to a contradiction. \qed

\end{proof}

\medskip


\smallskip

\begin{cor} \label{pgp4}
If $X$ has both the coarse $p$-DP* property  and the coarse p-Gelfand-Phillips property, then $X$ has the Schur property. The converse holds when $2 \leq p < \infty$.
\end{cor}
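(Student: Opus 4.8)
The plan is to prove both implications by combining the two characterizations already established, namely Theorem \ref{pdp*2} (coarse $p$-DP* is equivalent to every $T\in L(X;\ell_p)$ being completely continuous) and Theorem \ref{pgp3} (under the stated hypotheses, the coarse $p$-Gelfand--Phillips property forces weakly null coarse $p$-limited sequences to be norm null), together with the structural facts about coarse $p$-limited sets from Proposition \ref{plim6} and the conditional weak compactness granted by Proposition \ref{plim7} when $2\le p<\infty$.

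\medskip
\noindent For the forward direction I would argue directly. Suppose $X$ has both properties and let $(x_n)\subset X$ be weakly null; the goal is $\norma{x_n}\to 0$. A weakly null sequence is relatively weakly compact, so by the coarse $p$-DP* property the set $\{x_n\}$ is coarse $p$-limited. Then the coarse $p$-Gelfand--Phillips property says $\{x_n\}$ is relatively compact. A relatively compact, weakly null sequence is norm null (any norm-convergent subsequence must converge to the weak limit $0$, and relative compactness lets one pass to such subsequences along any subsequence, forcing the whole sequence to $0$). Hence every weakly null sequence is norm null, which is exactly the Schur property. Note this direction needs no restriction on $p$.

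\medskip
\noindent For the converse, assume $2\le p<\infty$ and that $X$ has the Schur property; I must produce both properties. For the coarse $p$-DP* property I would invoke Theorem \ref{pdp*2}: it suffices that every $T\in L(X;\ell_p)$ be completely continuous, and on a Schur space every bounded operator into any Banach space is trivially completely continuous, since weak convergence of the domain sequence already implies norm convergence, hence norm convergence of the images. For the coarse $p$-Gelfand--Phillips property I would appeal to the converse half of Theorem \ref{pgp3}, whose hypothesis is precisely $2\le p<\infty$: it reduces the property to showing every weakly null coarse $p$-limited sequence is norm null, and under the Schur property \emph{every} weakly null sequence is norm null, so the condition holds vacuously for coarse $p$-limited ones in particular.

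\medskip
\noindent The main obstacle is the asymmetry in the range of $p$: the converse relies essentially on Proposition \ref{plim7} (the source of the $2\le p<\infty$ restriction in Theorem \ref{pgp3}), which guarantees coarse $p$-limited sets are conditionally weakly compact and thereby lets one extract the weakly Cauchy subsequences whose successive differences are weakly null. Without $p\ge 2$ this conditional weak compactness can fail (as the $C([0,1])$ example after Proposition \ref{plim7} shows), so the coarse $p$-Gelfand--Phillips half of the converse cannot be expected to go through, which is why the statement restricts the reverse implication accordingly. The forward implication, by contrast, uses only the definitions and so should be routine.
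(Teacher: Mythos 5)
Your proof is correct and follows essentially the same route as the paper: the forward direction passes from weakly null to coarse $p$-limited (via the coarse $p$-DP* property) and then to norm null, and the converse invokes the $2\le p<\infty$ half of Theorem \ref{pgp3} under the Schur hypothesis. The only cosmetic difference is that for the coarse $p$-DP* part of the converse you use the completely-continuous-operator characterization of Theorem \ref{pdp*2}, whereas the paper routes through ``Schur $\Rightarrow$ DP* $\Rightarrow$ coarse $p$-DP*''; both are equally valid one-line arguments.
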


\begin{proof}
If $x_n \cvf 0$ in $X$, then $\{ x_n \}$ is a coarse $p$-limited set (because $X$ has the coarse $p$-DP* property). Now, it follows from Theorem \ref{pgp3} that $\norma{x_n} \to 0$.

Since every Schur space has the DP* property and the DP* property implies in the coarse $p$-DP* property, it remains us to check that every Schur space has the coarse p-Gelfand-Phillips property, when $2 \leq p < \infty$, what is an immediate consequence of the converse part in Theorem \ref{pgp3}.
\qed
\end{proof}
\begin{rema} \label{L1}  The space $L_1(\mu),~\mu \text{ a measure,} $ is a coarse $1$-Gelfand-Phillips space if, and only if, it is a Schur space. \end{rema}
\begin{proof} As pointed out in \cite[5.1]{AK}, we may assume that $\mu$ is a probability measure without loss of generality.

 Since $L_1(\mu)$ has the Dunford-Pettis property, Corollary \ref{pgp4} yields the necessary condition.

Suppose now that $L_1(\mu)$ is a Schur space. Let $(f_n)$ be a coarse $1$-limited sequence. We claim that it is a relatively weakly compact set. If not, we apply \cite[Theorem 5.2.9]{AK} to the set $\{f_n\}$ and obtain a sequence, that we denote the same, equivalent to the canonical basis of $\ell_1$ that spans a complemented subspace. If $A\in L\big(L_1(\mu);\ell_1\big)$ denotes such projection, $\{A(f_n)\}=\{f_n\}$ should be a relatively compact set. This is not possible being $\{f_n\}$
the canonical basis, so the claim is proved. To conclude, observe that if a coarse $1$-limited set is not relatively weakly compact, there must be a sequence in it failing to be relatively weakly compact, in contradiction to the claim.\qed
\end{proof}

\begin{rema} \label{pgp5}
If $1 \leq p < \infty$, then the dual Tsirelson's space $T'$ has the coarse $p$-DP* property , but it fails to have the coarse p-Gelfand-Phillips property.
\end{rema}
\begin{proof}
It follows from \cite[Prop 1.5, Ex. 1.9]{castsan} that every bounded operator from $\ell_{q}$ into the Tsirelson's space $T$ is compact for all $1 \leq q < \infty$. Consequently, every bounded operator from the dual Tsirelson's space $T'$ into $\ell_p$ is compact for all $1 < p < \infty$. Consequently, $B_{T'}$ is a coarse $p$-limited set for all $1 < p < \infty$, that is not compact. Therefore, $T'$ has the coarse $p$-DP* property , but it fails to have the coarse p-Gelfand-Phillips property.

In order to achieve the result for $p = 1$, note that since $T'$ is reflexive and $\ell_1$ has the Schur property, we get that every bounded operator from $T'$ into $\ell_1$ is compact. \qed 
\end{proof}

\medskip

The following result is an adaptation of a result from \cite{emma1}.

\begin{prop} \label{pgp6}
Let $2 \leq p < \infty$.
If $X'$ has the coarse p-Gelfand-Phillips property and $Y$ has the Schur property, then $L(X;Y)$ has the coarse p-Gelfand-Phillips property.
\end{prop}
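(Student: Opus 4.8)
The plan is to invoke the sequential characterization of the coarse $p$-Gelfand--Phillips property. Since $2\leq p<\infty$, by Theorem \ref{pgp3} it suffices to prove that every weakly null coarse $p$-limited sequence $(T_n)\subset L(X;Y)$ satisfies $\norma{T_n}\to 0$. First I would extract the pointwise information: for fixed $x\in X$ and $y'\in Y'$ the functional $T\mapsto y'(Tx)$ lies in $L(X;Y)'$, so weak nullity of $(T_n)$ forces $y'(T_nx)\to 0$; thus $(T_nx)$ is weakly null in $Y$ for each $x$, and the Schur property of $Y$ upgrades this to $\norma{T_nx}\to 0$. Hence $T_n\to 0$ in the strong operator topology.

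Next I would descend to $X'$ through the adjoints. For each fixed $y'\in Y'$ the map $\Psi_{y'}\colon L(X;Y)\to X'$, $\Psi_{y'}(T)=T'y'$, is bounded, so by Proposition \ref{plim6}-4 the set $\{T_n'y'\}=\Psi_{y'}(\{T_n\})$ is coarse $p$-limited in $X'$; since $X'$ has the coarse $p$-Gelfand--Phillips property this set is relatively compact. On the other hand, for $x\in X$ we have $(T_n'y')(x)=y'(T_nx)\to 0$ by the previous step, so $T_n'y'\cvfe 0$ in $X'$. A relatively compact weak*-null sequence is norm null (any norm limit of a subsequence must vanish), whence $\norma{T_n'y'}\to 0$ for each fixed $y'$. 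In other words, the adjoints converge to $0$ strongly on $Y'$.

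The main obstacle is the final upgrade from this pointwise (in $y'$) norm convergence to the uniform statement $\norma{T_n}=\sup_{\norma{y'}\leq 1}\norma{T_n'y'}\to 0$, since the natural ``witnesses'' $y_n'\in B_{Y'}$ with $\norma{T_n'y_n'}>\epsilon$ vary with $n$ and cannot be absorbed into a single operator into $\ell_p$. I would handle this by proving that the evaluation set $\bigcup_n T_n(B_X)$ is relatively weakly compact in $Y$: were it not, then (as $Y$ is Schur, hence contains no copy of $c_0$ and is weakly sequentially complete) Rosenthal's theorem would produce a sequence $v_k=T_{m_k}\xi_k$ equivalent to the $\ell_1$-basis, and a $2$-summing extension $\tilde S\colon Y\to\ell_p$ with $\tilde S v_k=e_k$ built exactly as in the proof of Proposition \ref{plim7}; then $(\tilde S T_{m_k})$ would be a weakly null coarse $p$-limited sequence in $L(X;\ell_p)$ that is bounded below in norm, which should be ruled out by the same reduction applied with target $\ell_p$. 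Granting relative weak compactness, the Schur property makes $\bigcup_n T_n(B_X)$ relatively norm compact; passing to the separable subspace generated by the $T_n$ so that $B_{Y'}$ is weak*-metrizable, one selects $y_n'\cvfe y_0'$ and writes $\langle y_n', T_nx_n\rangle=\langle y_n'-y_0', T_nx_n\rangle+\langle y_0', T_nx_n\rangle$, both terms tending to $0$ by norm compactness of the evaluation set together with the strong nullity $\norma{T_n'y_0'}\to 0$, contradicting $\norma{T_n'y_n'}>\epsilon$. This relative weak compactness step, and equivalently the target-$\ell_p$ case of the statement, is where I expect the real difficulty to lie and where the argument of \cite{emma1} is adapted.
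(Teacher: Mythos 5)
Your preparatory steps reproduce the paper's: the evaluation map $\Phi_{y'}(T)=T'y'$ is bounded, so $\{T_n'y'\}$ is a weakly null coarse $p$-limited set in $X'$, and the coarse $p$-Gelfand-Phillips property of $X'$ gives $\norma{T_n'y'}\to 0$ for each fixed $y'\in Y'$ (the paper gets this from Theorem \ref{pgp3}; your compactness-plus-weak*-null variant is also fine). The genuine gap is your final step. The workaround you sketch is not a proof: ruling out a weakly null, coarse $p$-limited sequence $(\tilde S T_{m_k})$ in $L(X;\ell_p)$ that is bounded below in norm amounts to invoking the very statement being proved with target space $\ell_p$ --- but $\ell_p$, $2\le p<\infty$, fails the Schur property, so that case is not covered by the hypotheses, and you explicitly leave it unproved (``where I expect the real difficulty to lie''). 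Nothing in the proposal closes this loop, so the argument is incomplete, and the subsequent machinery (relative weak compactness of $\bigcup_n T_n(B_X)$, Rosenthal, weak*-metrizability) all hangs on it.

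Moreover, the ``main obstacle'' you describe is illusory: you placed the witnesses on the wrong side. Writing $\norma{T_n}=\sup_{y'\in B_{Y'}}\norma{T_n'y'}$ indeed produces witnesses $y_n'$ varying with $n$; but writing $\norma{T_n}=\sup_{x\in B_X}\norma{T_nx}$ produces witnesses $x_n\in S_X$ with $\norma{T_nx_n}\ge\epsilon$, and these are absorbed automatically by the step you already proved: for each fixed $y'\in Y'$,
$$|y'(T_nx_n)|=|(T_n'y')(x_n)|\le \norma{T_n'y'}\,\norma{x_n}=\norma{T_n'y'}\to 0,$$
because the operator norm $\norma{T_n'y'}$ is itself a supremum over $B_X$. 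Hence the diagonal sequence $(T_nx_n)$ is weakly null in $Y$, and the Schur property of $Y$ forces $\norma{T_nx_n}\to 0$, contradicting $\norma{T_nx_n}\ge\epsilon$. This is exactly how the paper concludes; no weak compactness of the evaluation set, no Rosenthal argument, and no target-$\ell_p$ case are needed.
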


\begin{proof}
If $L(X;Y)$ does not have the coarse p-Gelfand-Phillips property, then by Theorem \ref{pgp3}, there exists a weakly null coarse p-limited sequence $(T_n) \subset L(X; Y)$ which is not norm null. Without loss of generality, we can assume that $\norma{T_n} \geq \epsilon$ for all $n$. In particular, for each $n$, there exists $x_n \in S_X$ such that $\norma{T_n(x_n)} \geq \epsilon$. For a given $y' \in Y'$, consider the bounded operator $\Phi_{y'}: L(X;Y) \to X'$ given by $\Phi_{y'}(T) = T'y'$. Since $(T_n)$ is a weakly null coarse p-limited sequence, $(\Phi_{y'}(T_n))$ is a weakly null coarse p-limited sequence in $X'$. As $X'$ has the coarse p-Gelfand-Phillips property, it follows from Theorem \ref{pgp3} that $\norma{\Phi_{y'}(T_n)} \to 0$. Consequently,
$$ |y'(T_n(x_n))| = |\Phi_{y'}(T_n)x_n| \leq \norma{\Phi_{y'}(T_n)} \to 0. $$
Since $y' \in Y'$ was taken arbitrarily, we get that $(T_n(x_n))$ is a weakly null sequence in $Y$ that has the Schur property. Thus, $\norma{T_n x_n} \to 0$, a contradiction. \qed
\end{proof}

\begin{prop} \label{pgp7}
\begin{enumerate}
    \item If $X$ has the coarse $p$-Gelfand-Phillips property, then
   every closed subspace of $X$ has the coarse $p$-Gelfand-Phillips property.
    \item The direct sum of two coarse $p$-Gelfand-Phillips spaces has the $p$-Gelfand-Phillips property.
\end{enumerate}
\end{prop}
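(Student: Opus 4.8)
The plan is to treat the two parts separately, reducing each to the coarse $p$-Gelfand-Phillips property already assumed on the relevant factor space, and using that the canonical operators (restriction, projection) carry coarse $p$-limited sets to where we need them.

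For part (1), I would let $M$ be a closed subspace of $X$ and take a set $A \subset M$ that is coarse $p$-limited \emph{in} $M$; the goal is to show $A$ is relatively compact. The key step is to observe that coarse $p$-limitedness passes upward from $M$ to $X$: any $T \in L(X;\ell_p)$ restricts to $T|_M \in L(M;\ell_p)$, so $T(A) = T|_M(A)$ is relatively compact by the hypothesis on $A$. Since $T$ was arbitrary, $A$ is coarse $p$-limited in $X$, and the coarse $p$-Gelfand-Phillips property of $X$ then makes $A$ relatively compact in $X$. Because $M$ is closed, $\overline{A} \subset M$, so relative compactness in $X$ coincides with relative compactness in $M$, which finishes this part.

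For part (2), I would write $X = X_1 \oplus X_2$ with both $X_1$ and $X_2$ having the coarse $p$-Gelfand-Phillips property, take a coarse $p$-limited set $A \subset X$, and use the bounded canonical projections $\pi_i : X \to X_i$. By Proposition \ref{plim6}-4 each image $\pi_i(A)$ is coarse $p$-limited in $X_i$, hence relatively compact by hypothesis. Then I would note the containment $A \subset \pi_1(A) \oplus \pi_2(A) \subset \overline{\pi_1(A)} \oplus \overline{\pi_2(A)}$, where the right-hand side is a finite product of compact sets and therefore compact; thus $\overline{A}$ is a closed subset of a compact set, so $A$ is relatively compact, giving the coarse $p$-Gelfand-Phillips property for $X_1 \oplus X_2$.

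I do not expect a serious obstacle in either part, since the substance is carried by restricting (respectively projecting) operators and invoking the hypotheses together with Proposition \ref{plim6}-4. The only points to state carefully are, in part (1), that relative compactness in the closed subspace $M$ agrees with relative compactness in $X$, and, in part (2), the elementary containment $A \subset \pi_1(A) \oplus \pi_2(A)$ together with the stability of compactness under finite products.
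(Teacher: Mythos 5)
Your proof is correct and follows essentially the same route as the paper's: part (1) is the paper's inclusion-map argument (your restriction $T|_M = T\circ i$ is just Proposition \ref{plim6}-4 unpacked), and part (2) uses the same canonical projections together with the containment $A \subset \pi_1(A)+\pi_2(A)$, which the paper states (slightly imprecisely) as an equality. Your explicit remarks on closedness of $M$ and on compactness of the sum of compact sets only make the argument more careful than the original.
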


\begin{proof}
$1.$ Let $Y$ be a closed subspace of $X$ and
let $A \subset Y$ be a coarse p-limited set. If $i: Y \to X$ denotes the inclusion map, then $i(A)$ is a coarse $p$-limited set in $X$. Hence $i(A)$ is relatively compact in $X$. Now, since $i$ is an inclusion from $Y$ into $X$, we have that $A$ is relatively compact in $Y$. Thus $Y$ has the coarse $p$-Gelfand-Phillips property.

$2. $ Assume that $X = Y \oplus Z$ where $Y$ and $Z$ are Banach spaces with the coarse $p$-Gelfand-Phillips property. Let $T: X \to Y$ and $S: X \to Z$ be the respective bounded projections.
If $A \subset X$ is a coarse $p$-limited set, then $T(A)$ and $S(A)$ are coarse $p$-limited sets in $Y$ and $Z$ respectively. Thus, their are both relatively compact in $Y$ and $Z.$ Since $A = T(A) + S(A)$, we get that $A$ is relatively compact in $X$. \qed
\end{proof}

\smallskip

\begin{rema}
As a consequence of Proposition \ref{pgp7}, we can point out the following:
\begin{enumerate}
    \item Since $L_q([0,1]), ~1 \leq q < \infty$, contains copy of $\ell_2$ \cite[Proposition 6.4.2]{AK} and $\ell_2$ does not have the coarse $1$-Gelfand-Phillips property, then $L_q([0,1])$ cannot have the coarse $1$-Gelfand-Phillips property.

    \item For $1 < q < \infty$, since $L_q([0,1])$ contains a complemented copy of $\ell_q$ \cite[Proposition 6.4.1]{AK}, we have that $L_q([0,1])$ does not have the coarse $p$-Gelfand-Phillips property for all $1 \leq p < q$.
\end{enumerate}
\end{rema}

\smallskip

%
%
%
%

\section{The coarse $p$-limited operators}

It is natural to consider the class of bounded operators related to the  coarse $p$-limited sets.

\begin{df} \label{op1}
We say that $T: X \to Y$ is a \textit{coarse $p$-limited operator} if $T(B_X)$ is a coarse $p$-limited set in $Y$.
\end{df}

\begin{exs} \label{op2}
\begin{enumerate}
    \item Since limited sets and $p$-limited sets are coarse $p$-limited sets, we have that limited operators and $p$-limited operators are coarse $p$-limited operators.

    \item The identity operator $I_{c_0}$ is a coarse $p$-limited operator which is neither limited nor $p$-limited.

    \item It follows from Proposition \ref{plim9} that the identity operator $I_{\ell_q}$ is a coarse $p$-limited operator  if and only if $p < q$.

    \item Since coarse $p$-limited sets are preserved by bounded operators, if $B_X$ is a coarse $p$-limited set, then every bounded operator from $X$ into any Banach space $Y$ is coarse $p$-limited.

    \item If $B_Y$ is a coarse $p$-limited set, then every bounded operator from any Banach space $X$ into $Y$ is coarse $p$-limited.

\end{enumerate}
\end{exs}

\begin{prop} \label{op3}
The set of all coarse $p$-limited operators between two Banach spaces $X$ and $Y$ is a closed ideal in $L(X; Y)$.
\end{prop}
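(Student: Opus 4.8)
The plan is to verify in turn that the coarse $p$-limited operators from $X$ to $Y$ form a linear subspace, that they absorb composition with arbitrary bounded operators on either side (the ideal property), and finally that this subspace is norm closed. All three parts rest on the stability properties of coarse $p$-limited \emph{sets} collected in Proposition \ref{plim6}, together with the encapsulating property of Proposition \ref{plim10}; no further machinery is needed.

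For the linear structure, suppose $S, T \colon X \to Y$ are coarse $p$-limited and $\lambda$ is a scalar. Since $(\lambda T)(B_X) = \lambda\, T(B_X)$ is the image of the coarse $p$-limited set $T(B_X)$ under the bounded operator $y \mapsto \lambda y$, it is coarse $p$-limited by Proposition \ref{plim6}-4, so $\lambda T$ is a coarse $p$-limited operator. For the sum I would use the inclusion $(S+T)(B_X) \subset S(B_X) + T(B_X)$: the right-hand side is coarse $p$-limited by Proposition \ref{plim6}-3, whence its subset $(S+T)(B_X)$ is coarse $p$-limited by Proposition \ref{plim6}-1. Thus $S+T$ is coarse $p$-limited.

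For the ideal property, let $T \colon X \to Y$ be coarse $p$-limited and take bounded operators $R \colon Y \to Z$ and $U \colon W \to X$; specializing to $U = A \in L(X;X)$ and $R = B \in L(Y;Y)$ gives the two-sided ideal structure in $L(X;Y)$. Writing $(RTU)(B_W) = R\big(T(U(B_W))\big)$ and using $U(B_W) \subset \norma{U}\, B_X$, I get $T(U(B_W)) \subset \norma{U}\, T(B_X)$, which is coarse $p$-limited as a bounded-operator image of $T(B_X)$ (Proposition \ref{plim6}-4), and hence so is its subset $T(U(B_W))$ by Proposition \ref{plim6}-1. A further application of Proposition \ref{plim6}-4 to $R$ shows that $R\big(T(U(B_W))\big)$ is coarse $p$-limited, so $RTU$ is a coarse $p$-limited operator.

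The step I expect to carry the real content is closedness, and there I would invoke Grothendieck's encapsulating property (Proposition \ref{plim10}). Let $(T_n)$ be coarse $p$-limited operators with $\norma{T_n - T} \to 0$ in $L(X;Y)$. Given $\epsilon > 0$, choose $n$ with $\norma{T - T_n} < \epsilon$; then for every $x \in B_X$ one has $\norma{Tx - T_n x} \le \epsilon$, so $T(B_X) \subset T_n(B_X) + \epsilon B_Y$. As $T_n(B_X)$ is coarse $p$-limited, Proposition \ref{plim10} yields that $T(B_X)$ is coarse $p$-limited, i.e. $T$ is a coarse $p$-limited operator. This is the only part that uses a genuinely nonelementary property, and the encapsulating property is precisely what makes it go through.
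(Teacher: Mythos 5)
Your proposal is correct and takes essentially the same approach as the paper: the composition (ideal) property is deduced from Proposition \ref{plim6} exactly as in the paper's proof, including the same trick of writing $T(U(B_W)) \subset \norma{U}\, T(B_X)$ and absorbing the scalar via a bounded operator, and norm-closedness is obtained from the encapsulating property of Proposition \ref{plim10} with the same $\epsilon$-inclusion $T(B_X) \subset T_n(B_X) + \epsilon B_Y$. The only difference is that you also verify the linear-subspace structure explicitly (sums and scalar multiples), a point the paper leaves implicit; this is a harmless and in fact slightly more complete rendering of the same argument.
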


\begin{proof}
If $(T_k)\subset L(X; Y)$ is a sequence of coarse $p$-limited operators that converges to $T \in L(X; Y)$, then given $\epsilon > 0$, there exists $k_0 \in \N$ such that $T(B_X)\subset T_{k_0}(B_X) + \epsilon B_F.$ Since $T_{k_0}(B_X)$ is a coarse $p$-limited set, $T(B_X)$ is also coarse $p$-limited set because of the encapsulating property.

Let $T: X \to Y$ be a coarse $p$-limited operator and let $S: Y \to Z$ be any bounded operator. Since $T(B_X)$ is a coarse $p$-limited set in $Y$, it follows from Proposition \ref{plim6}-4 that $S\circ T(B_X)$ is a coarse $p$-limited set in $Z$. Thus $S \circ T$ is a coarse $p$-limited operator.

Now let $T: X \to Y$ be a coarse $p$-limited operator and let $R: W \to X$ be any bounded operator. We want to prove that $T(R(B_W))$ is a coarse $p$-limited set in $Y$. Indeed, since
$R(B_{W}) \subset \norma{R} \, B_X$, we have that
$ T(R(B_{W})) \subset \norma{R} \, T(B_X). $ As $T(B_X)$ is a coarse $p$-limited set in $Y$, by considering the bounded operator $(y \in Y \mapsto \norma{R} \, y)$, we get by Proposition \ref{plim6}-4 that $\norma{R} \, T(B_X)$ is a coarse $p$-limited set as well. Consequently, $T(R(B_W))$ is a coarse $p$-limited set in $Y$ as we stated.
\qed
\end{proof}

\begin{teo} \label{op4}
For a Banach space $X$, the following are equivalent:
\begin{enumerate}
    \item $X$ has the coarse $p$-DP* property .

    \item Every weakly compact operator $T: Z \to X$ is coarse $p$-limited for any Banach space $Z$.

    \item Every weakly compact operator $T: \ell_1 \to X$ is coarse $p$-limited.
\end{enumerate}
\end{teo}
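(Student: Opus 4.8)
The plan is to prove the cyclic chain of implications $(1)\Rightarrow(2)\Rightarrow(3)\Rightarrow(1)$, since this is the cleanest way to handle an equivalence of three statements. The implications $(1)\Rightarrow(2)$ and $(2)\Rightarrow(3)$ should be the easy directions, with the real work concentrated in $(3)\Rightarrow(1)$, where I have only the weakest hypothesis (operators out of $\ell_1$) and must recover the full coarse $p$-DP* property.

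For $(1)\Rightarrow(2)$, I would argue as follows. Let $T:Z\to X$ be weakly compact, so $T(B_Z)$ is a relatively weakly compact subset of $X$. Since $X$ has the coarse $p$-DP* property, every relatively weakly compact set is coarse $p$-limited by Definition \ref{pdp*1}; hence $T(B_Z)$ is coarse $p$-limited, which says precisely that $T$ is a coarse $p$-limited operator. The implication $(2)\Rightarrow(3)$ is then immediate by specializing $Z=\ell_1$.

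For $(3)\Rightarrow(1)$, I would invoke Theorem \ref{pdp*2}: to show $X$ has the coarse $p$-DP* property it suffices to show every bounded operator $T:X\to\ell_p$ is completely continuous. So take a weakly null sequence $(x_n)\subset X$ and a bounded $T:X\to\ell_p$; I must prove $\norma{Tx_n}_p\to 0$. The idea is to manufacture a weakly compact operator out of $\ell_1$ whose range captures the sequence $(x_n)$, so that hypothesis (3) applies. Concretely, since $(x_n)$ is weakly null it is bounded, and I would define $S:\ell_1\to X$ by $S((a_n))=\sum_n a_n x_n$; this is a well-defined bounded operator because $(x_n)$ is bounded. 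The key point is that $S$ is weakly compact: a weakly null (hence relatively weakly compact) sequence has relatively weakly compact closed absolutely convex hull by the Krein--Šmulian theorem, and $S(B_{\ell_1})$ is contained in this hull. By hypothesis (3), $S$ is then a coarse $p$-limited operator, so $S(B_{\ell_1})$ is coarse $p$-limited, and therefore $T\circ S:\ell_1\to\ell_p$ has relatively compact image $T(S(B_{\ell_1}))$.

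The main obstacle is translating the relative compactness of $T(S(B_{\ell_1}))$ back into the statement $\norma{Tx_n}_p\to 0$. Since $Se_n=x_n$, the points $TSe_n=Tx_n$ all lie in the relatively compact set $TS(B_{\ell_1})$, so $(Tx_n)$ has a norm-convergent subsequence; because $(x_n)$ is weakly null and $T$ is weak-to-weak continuous, $(Tx_n)$ is weakly null in $\ell_p$, forcing every norm-convergent subsequence to have limit $0$. A standard subsequence-of-subsequence argument then gives $\norma{Tx_n}_p\to 0$ for the whole sequence, establishing complete continuity of $T$ and hence, via Theorem \ref{pdp*2}, statement (1). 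The delicate step to get right is the weak compactness of $S$; I expect the cleanest justification is that the closed convex hull of a weakly convergent sequence together with its limit is weakly compact, so one may instead consider the sequence $(x_n)\cup\{0\}$, which is itself relatively weakly compact, and apply Krein's theorem to its closed absolutely convex hull.
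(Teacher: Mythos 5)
Your proposal is correct and takes essentially the same approach as the paper: the same cyclic chain $(1)\Rightarrow(2)\Rightarrow(3)\Rightarrow(1)$, with the crucial implication $(3)\Rightarrow(1)$ handled by the identical construction --- the operator $S:\ell_1\to X$, $S((a_n))=\sum_n a_n x_n$, proved weakly compact via the Krein-Smulian theorem and then coarse $p$-limited by hypothesis (3), so that the images $Se_n=x_n$ form a coarse $p$-limited sequence. The only (cosmetic) difference is that you verify the complete-continuity criterion of Theorem \ref{pdp*2} on weakly null sequences and finish with a subsequence-of-subsequences argument, whereas the paper verifies Definition \ref{pdp*1} directly on an arbitrary relatively weakly compact set, which makes that last step unnecessary.
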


\begin{proof}
$1. \Rightarrow 2.$ Assume that $X$ has the coarse $p$-DP* property . If $T: Z \to X$ is a weakly compact operator, then $T(B_X)$ is a relatively weakly compact set in $X$, hence a coarse $p$-limited set. Thus, $T$ is a coarse $p$-limited operator.

$2. \Rightarrow 3.$ Obvious.

$3. \Rightarrow 1.$ Let $A$ be a relatively weakly compact subset of $X$ and let $S \in L(X; \ell_p)$. In order to prove that $S(A)$ is a relatively compact subset of $\ell_p$, take $(x_n) \subset A$. It suffices to prove that $(Sx_n)$ has a convergent subsequence in $\ell_p$. Indeed, since $A$ is relatively weakly compact, we can assume, without loss of generality, that $(x_n)$ is weakly convergent in $X$. Consider the bounded operator $T: \ell_{1} \to X$ given by
$T(a_j)_j = \sum_j a_j x_j$ for all $(a_j)_j \in \ell_1$.
Since $T(B_{\ell_1})$ is the circled convex hull of the relatively weakly compact set $\{ x_n \}$, we get by Krein-Smulian's theorem that $T$ is a weakly compact operator. By assumption, it follows that $T$ is a coarse $p$-limited operator, which means that $T(B_{\ell_1})$ is a coarse $p$-limited set in $X$. Since $Te_n = x_n$ for all $n$, we have that $(x_n)$ is a coarse $p$-limited sequence in $X$ and, consequently, $\{ Sx_n \}_n$ is a relatively compact set in $\ell_p$. Therefore, $(Sx_n)$ has a convergent subsequence as we claimed. \qed
\end{proof}

\smallskip

It is important to point out that we cannot establish a relationship between the class of coarse p-limited operators and the class of coarse q-limited operators for $p \neq q$. Following Remark \ref{plim8}, we have that the identity operator $I_{\ell_2}$ is a coarse 1-limited operator but it is not a coarse 2-limited operator. On the other hand, the unit operator $I_{L_2}$ is a coarse $\frac{3}{2}$-limited operator that is not coarse 2-limited.

\smallskip

The argument used in the proof of Theorem \ref{pdp*3} allows us to prove a similar result to \cite[Theorem 2.3]{cargalou}.

\begin{teo} \label{op5}
Let $X$ and $Y$ be Banach spaces. Assume first that $X$ has the coarse $p$-DP* property and let $T: X \to Y$ be a  non-coarse $p$-limited. Then $X$ contains a sequence $(x_n)$ which is equivalent to the canonical basis of $\ell_1$. Moreover, if $Y$ also has the coarse $p$-DP* property, then $(Tx_n)$ has a subsequence that is equivalent to the canonical basis of $\ell_1$.
\end{teo}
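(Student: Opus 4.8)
The plan is to exploit the equivalence furnished by Theorem \ref{pdp*2}: the coarse $p$-DP* property means precisely that every operator into $\ell_p$ is completely continuous. Coupling this with Rosenthal's $\ell_1$ theorem, in the spirit of the proof of Theorem \ref{pdp*3}, should give both assertions by essentially the same mechanism applied first in $X$ and then in $Y$.

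First I would unwind the hypothesis that $T$ is not coarse $p$-limited. By Definition \ref{op1}, the set $T(B_X)$ fails to be coarse $p$-limited in $Y$, so there is a bounded operator $S \in L(Y;\ell_p)$ for which $S(T(B_X))$ is not relatively compact in $\ell_p$. Choosing witnesses, I obtain a sequence $(z_n) \subset B_X$ such that $(STz_n)$ has no norm-convergent subsequence. Now I would bring in the coarse $p$-DP* property of $X$: by Theorem \ref{pdp*2} the composition $S \circ T : X \to \ell_p$ is completely continuous. The crucial step is to observe that a completely continuous operator carries weakly Cauchy sequences to norm-convergent ones, because the consecutive differences $z_{n_{k+1}}-z_{n_k}$ of a weakly Cauchy sequence are weakly null, whence their images tend to $0$ in norm and the image sequence is norm Cauchy. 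Consequently, if $(z_n)$ had a weakly Cauchy subsequence, then $(STz_n)$ would have a convergent subsequence, contradicting the choice of $(z_n)$. So $(z_n)$ has no weakly Cauchy subsequence, and Rosenthal's theorem yields a subsequence $(x_n)$ equivalent to the canonical basis of $\ell_1$, establishing the first assertion. I would be careful to record that along this subsequence $(STx_n)$ still admits no convergent subsequence.

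For the \emph{moreover} part I would run the identical argument one level down, now inside $Y$. Assuming $Y$ also has the coarse $p$-DP* property, Theorem \ref{pdp*2} makes $S$ itself completely continuous. Since $S(Tx_n) = STx_n$ has no convergent subsequence, the sequence $(Tx_n) \subset Y$ can have no weakly Cauchy subsequence, by the same completely-continuous-plus-differences reasoning. A final application of Rosenthal's theorem then produces a subsequence of $(Tx_n)$ equivalent to the canonical basis of $\ell_1$.

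I expect the only genuinely delicate point to be the passage from ``no convergent image subsequence'' to ``no weakly Cauchy subsequence'': one must verify that complete continuity upgrades weak Cauchyness all the way to norm convergence, which hinges on the consecutive differences being weakly null rather than merely the sequence being weakly convergent. Once that observation is isolated, everything else — the two appeals to Theorem \ref{pdp*2} and the two applications of Rosenthal's theorem — is routine bookkeeping.
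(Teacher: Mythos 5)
Your overall strategy is sound, and in fact more modular than the paper's: you invoke Theorem \ref{pdp*2} to make $S\circ T$ (and later $S$) completely continuous, whereas the paper reruns by hand the tail-sum estimates for the weak* $p$-summable representation of an operator into $\ell_p$, essentially repeating the computation from the proof of Theorem \ref{pdp*3}. Your reduction of ``$T$ not coarse $p$-limited'' to an operator $S \in L(Y;\ell_p)$ and a sequence $(z_n) \subset B_X$ such that $(STz_n)$ has no convergent subsequence is correct, as are both applications of Rosenthal's theorem. However, there is a genuine error in your justification of the key lemma. You argue that a completely continuous operator $U$ sends a weakly Cauchy sequence $(w_k)$ to a norm-convergent one because the consecutive differences $w_{k+1}-w_k$ are weakly null, so $\|Uw_{k+1}-Uw_k\| \to 0$, ``and the image sequence is norm Cauchy.'' That last inference is false: a sequence whose consecutive differences tend to $0$ in norm need not be Cauchy (consider the partial sums of the harmonic series, or $\sin(\log n)$ times a fixed unit vector). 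So, as written, the step you yourself flag as the delicate one does not follow.

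The lemma itself is true, and the repair is standard. If $(Uw_k)$ is not norm Cauchy, choose $\epsilon > 0$ and indices $j_1 < k_1 < j_2 < k_2 < \cdots$ with $\|Uw_{j_i} - Uw_{k_i}\| \geq \epsilon$ for all $i$. Because $(w_k)$ is weakly Cauchy, the sequence $(w_{j_i} - w_{k_i})_i$ is weakly null --- this holds for differences along \emph{any} two index sequences tending to infinity, not just consecutive ones --- so complete continuity forces $\|U(w_{j_i} - w_{k_i})\| \to 0$, a contradiction; being norm Cauchy in a complete space, $(Uw_k)$ then converges. Note that this is exactly the device the paper uses: its contradiction is built from the far-apart differences $x_{i_n} - x_n$, with $i_n \geq n$ chosen according to the tails, never from consecutive differences. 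With this one-line repair your proof is complete, and it buys a cleaner argument: Theorem \ref{pdp*2} is quoted as a black box exactly where the paper re-derives, in coordinates, the complete continuity it needs.
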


\begin{proof}
We begin by assuming that $X$ has the coarse $p$-DP* property. If $T: X \to Y$ is not a coarse $p$-limited operator, then there exists a weak* p-summable sequence $(y_n') \subset Y'$ such that the sequence $s_n = \sup \conj{\sum_{i \geq n} |y_n'(Tx)|^p}{x \in B_X}$ is not null. Without loss of generality we can assume that there exists $\epsilon > 0$ and a sequence $(x_n) \subset B_X$ such that $\sum_{i \geq n} |y_n'(Tx_n)|^p \geq \epsilon^p$ for all $n$. By  way of contradiction, we assume that $(x_n)$ has a weakly Cauchy subsequence that will be denoted by $(x_n)$. Since $\sum_{i=1}^\infty |y_i'(Tx_n)|^p < \infty$ for every $n$, there exists $i_n \in \N$ such that $\sum_{i \geq k_n} |y_i'(Tx_n)|^p < (\epsilon/2)^p$. Thus,
\begin{align*}
    \norma{(y_i'(Tx_{i_n} - Tx_n))_{i = i_n}}_p & \geq  \norma{(y_i'(Tx_{i_n}))_{i = i_n}}_p - \norma{(y_i'(Tx_n))_{i=i_n}}_p \\
            & = \left ( \sum_{i=i_n}^\infty |y_i'(Tx_{i_n})| \right )^{1/p} - \left ( \sum_{i=i_n}^\infty |y_i'(Tx_{n})| \right )^{1/p} \\
            & > \epsilon.
\end{align*}
However, since the sequence $(x_{i_n} - x_n)$ is weakly null, the coarse $p$-DP* property of $X$ yields that $\conj{x_{i_n} - x_n}{n \in \N}$ is a coarse $p$-limited set, what implies that
$$ \conj{\sum_{i = k} |y_i' \circ T (x_{i_n} - x_n)|^p}{n \in \N} \to 0 \quad \text{ as } k \to \infty, $$
a contradiction. Hence $(x_n)$ has no weakly Cauchy subsequence. Now, by Roshental's theorem, $(x_n)$ has a subsequence, that will also be denoted by $(x_n)$, that is equivalent to the canonical basis of $\ell_1$.

We can now assume that $Y$ also has the coarse $p$-DP* property. The same argument used in the first part of this proof shows that $(Tx_n)$ has no weakly Cauchy subsequence. Again by Roshental's theorem, we get that there exists a subsequence $(Tx_{n_k})$ which is equivalent to the canonical basis of $\ell_1$. \qed
\end{proof}

\smallskip
The following corollary is an immediate consequence of Theorem \ref{op5}.

\begin{cor} \label{op6}
If $X$ has the coarse $p$-DP* property and it does not contain copy of $\ell_1$, then every bounded operator from $X$ into any other Banach space $Y$ is coarse $p$-limited.
\end{cor}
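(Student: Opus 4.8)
The plan is to derive Corollary~\ref{op6} directly from Theorem~\ref{op5} by a contrapositive argument, since the corollary is precisely the situation where the hypothesis ``$X$ contains $\ell_1$'' of the theorem is ruled out. First I would suppose, seeking a contradiction, that some bounded operator $T:X\to Y$ fails to be coarse $p$-limited. Since $X$ has the coarse $p$-DP* property by hypothesis, Theorem~\ref{op5} applies and yields a sequence $(x_n)\subset X$ equivalent to the canonical basis of $\ell_1$. But the existence of such a sequence means exactly that $X$ contains an isomorphic copy of $\ell_1$, contradicting the assumption that $X$ does not contain a copy of $\ell_1$.

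Concretely, the argument runs as follows. Let $T\in L(X;Y)$ be arbitrary. If $T$ were not coarse $p$-limited, the first part of Theorem~\ref{op5} (which needs only that $X$ has the coarse $p$-DP* property) would produce a sequence in $X$ spanning a copy of $\ell_1$; this is impossible under our standing assumption. Hence every such $T$ must be coarse $p$-limited, which is exactly the conclusion. The key point is that only the first assertion of Theorem~\ref{op5} is invoked: I do not need the stronger conclusion about $(Tx_n)$ spanning $\ell_1$ in $Y$, nor do I need $Y$ to have any special property, which is why the corollary holds for \emph{any} target space $Y$.

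There is essentially no obstacle here; the work has all been done in Theorem~\ref{op5}, and the corollary is a one-line logical consequence. The only thing to be careful about is that I invoke the correct half of the theorem: the production of an $\ell_1$-sequence inside $X$ requires merely the coarse $p$-DP* property of $X$, and this is precisely the hypothesis we are granted. Thus the proof amounts to the observation that ``not coarse $p$-limited'' plus ``$X$ has the coarse $p$-DP* property'' forces a copy of $\ell_1$ in $X$, so forbidding $\ell_1\subset X$ forces every operator out of $X$ to be coarse $p$-limited.
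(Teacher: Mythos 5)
Your proposal is correct and is exactly the argument the paper intends: the paper states Corollary~\ref{op6} as an immediate consequence of Theorem~\ref{op5}, namely the contrapositive of its first assertion, which only requires the coarse $p$-DP* property of $X$ and no hypothesis on $Y$. Nothing is missing.
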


\smallskip

The following result characterizes the coarse $p$-Gelfand-Phillips property in terms of coarse $p$-limited operators.

\begin{prop}
For a Banach space $X$, the following assertions are equivalent:
\begin{enumerate}
    \item $X$ has the coarse $p$-Gelfand-Phillips property.

    \item For every Banach space $Y$, every coarse $p$-limited operator from $Y$ into $X$ is compact.

    \item Every coarse $p$-limited operator from $\ell_1$ into $X$ is compact.
\end{enumerate}
\end{prop}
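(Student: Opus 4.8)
The plan is to close the cycle $(1)\Rightarrow(2)\Rightarrow(3)\Rightarrow(1)$, putting almost all the work into the last implication while the first two follow directly from the definitions. For $(1)\Rightarrow(2)$ I would simply unwind Definition \ref{op1}: if $T\colon Y\to X$ is coarse $p$-limited, then $T(B_Y)$ is a coarse $p$-limited set in $X$, and since $X$ has the coarse $p$-Gelfand-Phillips property this set is relatively compact, which is exactly the assertion that $T$ is compact. The implication $(2)\Rightarrow(3)$ is then just the special case $Y=\ell_1$.

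The substance is in $(3)\Rightarrow(1)$, and here I would reuse the device from the proof of Theorem \ref{op4}. Let $A\subset X$ be coarse $p$-limited. First note that $A$ is automatically bounded: for each $x'\in X'$ the rank-one operator $x\mapsto x'(x)e_1\in\ell_p$ sends $A$ to a relatively compact, hence bounded, set, so $A$ is pointwise bounded on $X'$ and the uniform boundedness principle applies. To prove $A$ relatively compact it suffices to extract a convergent subsequence from an arbitrary sequence $(x_n)\subset A$. I would define $T\colon\ell_1\to X$ by $T\big((a_j)_j\big)=\sum_j a_j x_j$; boundedness of $(x_j)$ makes $T$ a bounded operator with $Te_n=x_n$ for all $n$.

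The key claim is that $T$ is a coarse $p$-limited operator. Since $\{x_n:n\in\N\}\subset A$, this set is coarse $p$-limited by Proposition \ref{plim6}-1, and hence its circled convex hull $aco\,(\{x_n\})$ is coarse $p$-limited by Proposition \ref{plim6}-5. As $T(B_{\ell_1})\subset\overline{aco\,(\{x_n\})}$ (every element $\sum_j a_j x_j$ with $\sum_j|a_j|\le 1$ is a norm-limit of finite partial sums lying in $aco\,(\{x_n\})$), Proposition \ref{plim6}-2 and -1 yield that $T(B_{\ell_1})$ is coarse $p$-limited; that is, $T$ is a coarse $p$-limited operator. By hypothesis $(3)$, $T$ is then compact, so $T(B_{\ell_1})$ is relatively compact. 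Since $x_n=Te_n\in T(B_{\ell_1})$, the sequence $(x_n)$ has a convergent subsequence, and therefore $A$ is relatively compact and $X$ has the coarse $p$-Gelfand-Phillips property.

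The only delicate point is the verification that $T$ is coarse $p$-limited, which is precisely where the permanence properties of Proposition \ref{plim6} (stability under subsets, closures, and circled convex hulls) do the heavy lifting; the boundedness remark and the remaining steps are routine.
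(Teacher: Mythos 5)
Your proof is correct and follows essentially the same route as the paper: $(1)\Rightarrow(2)\Rightarrow(3)$ are immediate from the definitions, and for $(3)\Rightarrow(1)$ both arguments construct the operator $T\colon \ell_1 \to X$ with $Te_n = x_n$ and use the permanence properties of Proposition \ref{plim6} to show $T(B_{\ell_1})$ is coarse $p$-limited, hence compact by hypothesis, so that $(x_n)$ has a convergent subsequence. Your two added details --- boundedness of $A$ via rank-one operators plus uniform boundedness, and passing to the \emph{closure} of the circled convex hull before applying Proposition \ref{plim6} --- are careful touches that the paper's proof leaves implicit, but they do not change the argument.
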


\begin{proof}
$1. \Rightarrow 2.$ and $2. \Rightarrow 3.$ are obvious.

$3. \Rightarrow 1.$ Let $A$ be a coarse $p$-limited susbet of $X$. In order to prove that $A$ is relatively compact, let $(x_n) \subset A$ and consider the bounded operator $T: \ell_1 \to X$ given by $T(a_j)_j = \sum_{j} a_j x_j$ for all $(a_j)_j \in \ell_1$. Since $T(B_{\ell_1}) \subset aco \, (T \{ x_n \}_n)$, we get from Remark \ref{plim6} that $T(B_{\ell_1})$ is a coarse $p$-limited set. Hence, by assumption, $T$ is a compact operator. Since $(x_n) \subset T(B_{\ell_1})$, we have that $(x_n)$ contains a convergent subsequence. Therefore $A$ is a relatively compact set, what implies that $X$ has the coarse $p$-Gelfand-Phillips property. \qed
\end{proof}

\smallskip

\noindent \textbf{Acknowledgments: } This paper is part of the second author Ph. D. thesis supervised by both Prof. Mary Lilian Louren\c co and Prof. Pablo Galindo whom he warmly thanks for their support and encouragement.

\end{document}